\newcommand{\W}{\mathrm{W}}
\newcommand{\Proj}{\mathbb P}
\newcommand{\N}{\mathbb N}
\title{A point counting algorithm using cohomology with
  compact support} 
\author{Gweltaz Chatel\inst{3} \and David Lubicz\inst{1,2}}
\institute{
  DGA-CELAR, Bruz, France \and IRMAR, Universit\'e de Rennes 1, France \and 
Universit\`a degli Studi di Padova, Italy\\
}
\begin{document}
\maketitle

\begin{abstract}
  We describe an algorithm to count the number of rational points of
  an hyperelliptic curve defined over a
  finite field of odd characteristic which is based upon the
  computation of the action of the Frobenius morphism on a basis of the
  Monsky-Washnitzer cohomology with compact support. This algorithm
  follows the vein of a systematic exploration of potential
  applications of cohomology theories to point counting.

  Our algorithm decomposes in two steps. A first step which consists
  in the computation of a basis of the cohomology and then a second
  step to obtain a representation of the Frobenius morphism.  We
  achieve a $\tilde{O}(g^4 n^{3})$ worst case time complexity and $O(g^3
  n^3)$ memory complexity where $g$ is the genus of the curve
  and $n$ is the absolute degree of its base field. We give a detailed
  complexity analysis of the algorithm as well as a proof of
  correctness.
\end{abstract}

\section{Introduction}
The problem of counting the number of rational points on a smooth
projective algebraic curve defined over a finite field has attracted a
lot of attention in recent years driven by well known
cryptographic applications.

The known point counting algorithms for a curve $C$ over a finite field $k$ with
Jacobian $J(C)$ can roughly be divided in two large classes, the
$\ell$-adic algorithms and the $p$-adic algorithms.

The $\ell$-adic point counting algorithms can be interpreted as the
computation of the action of the Frobenius morphism on the $\ell$-adic
Tate module of the Jacobian, $\ell$ being prime to the characteristic
of the base field. The $\ell$-adic point counting algorithms all
follow an original idea of Schoof~\cite{Schoof1} which consists in
computing the action of the Frobenius morphism on the group of
$\ell$-torsion points of $J(C)$ for primes $\ell$ big enough to
recover the zeta function of the curve by the Chinese remainder
theorem. In the case that $C$ is an elliptic curve, this first
algorithm has subsequently been improved by Elkies, Atkin and other
authors~\cite{MR1486831,Schoof1} and resulted in a very efficient
algorithm. Some of these techniques have been adapted in the case of
higher genus curves~\cite{Pila1990} but some improvement have yet to
be done in order to be able to reach cryptographic size fields.
Nevertheless some progress has been made~\cite{Gaudry,Gaudrynt} in
that direction.

On the other side, the $p$-adic point counting algorithms can be
interpreted as the computation of the action of the Frobenius morphism
on some $p$-adic cohomology group. The first such algorithm has been
described by Satoh~\cite{MR2001j:11049}. The algorithm of Satoh relies
on the computation of the action of the Frobenius morphism on the
invariant differential forms of the canonical lift of an ordinary
Jacobian. It has been generalised and made more efficient in a series
of papers \cite{SaSkTa2001,Mestre4,Mestre5,Gaudry2002,LL03,MR2293798}.
Other authors have explored other possible representations of the
Frobenius morphism. In~\cite{Kedlaya2001}, Kedlaya explains how to
obtain a basis of the Monsky-Washnitzer cohomology of an hyperelliptic
curve and compute the Frobenius morphism acting on it. Lauder used
Dwork cohomology to obtain general point counting algorithms
\cite{MR1916921,MR2067435}. Later on, Lauder introduced deformation
techniques in which one consider a one parameter family of curves and
use the Gauss-Manin connection in order to carry over this family the
action of the Frobenius morphism~\cite{MR2044050}.

Pursuing the exploration of possible $p$-adic cohomology theories, we
propose in this paper to use Monsky-Washnitzer cohomology with compact
support.  This cohomology theory comes with a Lefschetz trace formula
and as a consequence can be used to compute the number of rational
points of a curve defined over a finite field. Starting with an
hyperelliptic curve defined over a finite field of odd characteristic,
we explain how to represent elements of the Monsky-Washnitzer
cohomology with compact support and obtain a basis of this vector
space.  Then we compute the representation of the Frobenius morphism.

Our algorithm breaks into two steps. The first step is the computation
of a basis of the Monsky-Washnitzer cohomology with compact support.
Unlike the case of the algorithm of Kedlaya this step is non trivial
from an algorithmic point of view. The two main features of this part
of our algorithm is:
\begin{itemize}
\item The use of the stability of the cohomology of Monsky-Washnitzer
  with compact support by finite \'etale descent to reduce the
  computation of a basis of the cohomology to the computation of a
  basis of the global horizontal sections of an isocrystal over the affine
  line. These global sections verify a differential equation provided
  by the Gauss-Manin connection that we interpret as the solutions of
  a linear system. This is the so called global method. 
\item Unfortunately, the global method is inefficient since it
  involves the inversion of a matrix the size of which is in the order
  of the analytic precision required for the computations. We explain
  how to speed up the computation of the basis by taking advantage of
  local inhomogeneous differential equations deduced from the Gauss-Manin
  connection. Once we have computed global solutions for the
  Gauss-Manin connection up to a small analytic precision it is
  possible to prolong them locally using an asymptotically fast algorithm
  such as~\cite{BostanCOSSS07}.
\end{itemize}

The second step of the algorithm is the computation of a lift of the
Frobenius morphism as well as its action on a basis of the cohomology.
The computation of a lift of the Frobenius morphism that we describe
in this paper is standard, being just an adaptation of
\cite{Kedlaya2001} to our case. Nonetheless, for the action of the
Frobenius morphism, we explain how it is possible to turn into an
efficient algorithm the knowledge of local differential equations
deduced from the twist of the Gauss-Manin connection by the Frobenius
morphism.

We give a proof of correctness for our algorithm. As usual for the
$p$-adic algorithms, the main problem lies in the assessment of the
analytic and $p$-adic precisions necessary to recover the
characteristic polynomial of the Frobenius morphism. For this we used
a variant of a result of Lauder~\cite{MR2261044}. We provide the
algorithm with a detailed complexity analysis.  In this complexity
estimate, we suppose that the characteristic $p$ of the base field of
the hyperelliptic curve is fixed and consider complexity bounds when
the genus $g$ and the absolute degree $n$ of the base field increase.
Using the soft-O notation to neglect the logarithmic terms, we obtain
a $\tilde{O}(g^4 n^{3})$ time complexity with a $O(g^3 n^3)$ memory
consumption. In order to achieve this time complexity, we use in an
essential manner asymptotically fast algorithms to compute with power
series \cite{MR0398151,BoSaSc08,MR0314238,BostanCOSSS07}.  We remark
that the worst case complexity of our algorithm has the same
complexity bound in $n$ as the algorithm of Kedlaya but nonetheless a
better behaviour with respect to the genus.

\paragraph{Organisation of the paper} In Section \ref{secmath1}, we
recall the basic construction of the cohomology of an overconvergent
isocrystal over the affine line. In Section \ref{secalgo1}, we deduce
from it an algorithm to compute a basis of the cohomology. We give a
proof of correctness and a detailed complexity analysis of this
algorithm in Section \ref{secalgo2}. In Section \ref{seclift}, we
explain how to compute the action of the Frobenius morphism on this
basis and recover the zeta function of the curve. In Section
\ref{secconl}, we obtain complexity estimates for the whole algorithm
and discuss practical results and implementations.

\paragraph{Notations} In all this paper, $k$
is a finite field of odd characteristic $p$. We denote by $W(k)$ the
ring of Witt vectors with coefficients in $k$ and by $K$ the field of
fractions of $W(k)$. For all $x \in K$, $|x|$ denotes the usual
$p$-adic norm of $x$.

We use the notation $\underline{\ell}$ for a multi-indice $(\ell_0, \ldots,
\ell_{k})$ where $k>0$ is an integer. For instance, $\underline{0}$ is
the $k$-fold multi-indice $(0, \ldots, 0)$. For $\underline{\ell}$ and
$\underline{m}$ two multi-indices $\underline{\ell} > \underline{m}$
means that for all $i \in \{ 0, \ldots , k \}$, $\ell_i > m_i$. Moreover
if $\underline{\ell}$ is a multi-indice, we set $|\underline{\ell}|= \max \{
|\ell_i|, i =0, \ldots , k \}$.

We recall that an element $f=\sum_{\underline{\ell}} a_{\underline{\ell}}
t^{\underline{\ell}}$ of $K[[t_0, \ldots , t_k]]$ is called overconvergent
if there exists $\eta_0 >1$ such that $\lim |a_{\underline{\ell}}|
\eta_0^{|\underline{\ell}|}=0$. The sub-ring of $K[[t_0, \ldots ,t_k]]$
consisting of overconvergent elements is denoted by $K[t_0, \ldots
,t_k]^\dagger$ and is called the weak completion of $K[t_0, \ldots ,
t_k]$.

\section{Basic definitions}\label{secmath1}
In order to fix the notations, we first recall some basic facts about
Monsky-Washnitzer cohomology with compact support. In the same way as
the theory without support, the Monsky-Washnitzer cohomology with
compact support associates to a smooth affine curve $C_k$ of genus $g$
over the finite field $k$ a graded $K$-vector space of finite
dimension denoted by $(H^i_{MW,c}(C_k/K))_{i \in \{0,1,2\}}$. There
exists a trace formula in this theory which can be used to recover the
zeta function of $C_k$ by computing the action of the Frobenius
morphism over the cohomology groups. We remark that as we are dealing
with curves, only $H^1_{MW,c}(C_k/K)$ is non trivial.  Moreover, we
will use the property of stability of this cohomology theory by finite
\'etale descent to do all our computations over the affine line. This
entails working with non-trivial coefficients which are overconvergent
modules with connection.

\subsection{The geometric setting}
In the following, we focus on the case of hyperelliptic curves which
constitute the simplest family of curves with arbitrary genus.  Let
$k$ be a finite field of odd characteristic.  Let $C_k$ be the affine
model of a smooth genus $g$ hyperelliptic curve over $k$ given by an
equation $Y^2=\prod_{i=1}^{2g+1} (X - \overline{\lambda}_i)$ where
$\overline{\lambda}_i \in k$.  Let $\pi_k: C_k \rightarrow
\Bbb{A}^1_k$ be the projection along the $Y$-axis.  If we denote by
$U_k$ the \'etale locus of $\pi_k$ and by $V_k$ its image, we have a
diagram
\begin{equation}\label{diag1}
\xymatrix{
  C_k \ar[d]^{\pi_k}  & U_k \ar[d]^{\pi_k} \ar@{_{(}->}[l]\\
  \Bbb{A}^1_{k} & V_k \ar@{_{(}->}[l] },
\end{equation}
where the horizontal maps are open immersions and where $\pi_k$ is
finite \'etale over $V_k$. We let $A_k$ and $B_k$ be the coordinate
rings of $U_k$ and $V_k$. By~\cite{MR0345966}, Theorem 6, we can lift
diagram (\ref{diag1}) to a diagram
\begin{equation}
\xymatrix{C \ar[d]^{\pi}  & U \ar[d]^{\pi} \ar@{_{(}->}[l]\\
  \Bbb{A}^1_{W(k)} & V \ar@{_{(}->}[l] },
\end{equation} 
of smooth $W(k)$-schemes where the horizontal maps are open immersions
and $\pi$ is finite \'etale over $V$. Let $A$ and $B$ be the
coordinate rings of $U$ and $V$ respectively. Let $\Lambda_k=\{
\overline{\lambda}_1,\ldots,\overline{\lambda}_{2g+1},\overline{\infty}
\}$ be the complement of $V_k(\overline{k})$ in
$\Bbb{P}^1_k(\overline{k})$. We can suppose that $B=W(k)[t,
(t-\lambda_1)^{-1}, \ldots, (t-\lambda_{2g+1})^{-1}]$ where $t$ is an
indeterminate and where $\lambda_i \in W(k)$ lifts
$\overline{\lambda}_i$ for $i=1, \ldots , 2g+1$. Let $\Lambda = \{
\lambda_1,\ldots,\lambda_{2g+1},\infty \}$, $A_K = A \otimes_{W(k)} K$
and $B_K = B\otimes_{W(k)} K$.  By finite \'etale descent (cf
Corollary 2.6.6 of \cite{MR1728610}) we have $H^1_{MW,c}(U_k/K)=
H^1_{MW,c}(V, \pi_* A^\dagger_K)$.  In the following, we always
consider $A_K^\dagger$ with its $B^\dagger_K$-module structure
provided by $\pi$.

There is a Gauss-Manin connection on $A_K^\dagger$ and the computation
of a basis of the cohomology of a curve boils down to the computation
of horizontal sections for this connection. In the remaining, if $t$
is an affine parameter, we will use the notation $(t-\infty)=t^{-1}$
to indicate a local parameter at the infinity.

\subsection{The space $B_c$}\label{A_c}

In this section, we give a definition of the Monsky-Washnitzer
cohomology with compact support over a Zariski open subset of the
affine line. Note that in this situation this theory coincides with
the so-called dual theory of Dwork~\cite{MR1085482}. Let $B$,
$\Lambda$ as before.

\begin{definition}
For $\lambda \in \Bbb{P}^1_K$ rational, let 
\begin{equation*}
\begin{split}
R_\lambda & =\big\{\sum_{n\in \Bbb{Z}} a_\ell (t-\lambda)^\ell|a_\ell \in K, \forall
\eta <1, |a_\ell|\eta ^{\ell} \rightarrow _{+\infty} 0\text{, and } \\ & \exists
\eta_0 >1, |a_\ell|\eta_0^{|\ell|} \rightarrow _{-\infty} 0 \big\},
\end{split}
\end{equation*}
be the ring of power series converging on a subset
$\eta_0^{-1}<|t-\lambda|<1$ with $\eta_0>1$. The Robba ring associated
to $B$ is the ring $$\mathcal{R}_B=\oplus_{\Lambda}R_\lambda.$$
\end{definition}

We recall that the weak completion of $B_K=K[t, (t-\lambda_1)^{-1}, \ldots ,
(t-\lambda_{2g+1})^{-1}]$ is
\begin{equation*}
\begin{split}
  B^\dagger_K & =\big\{\sum_{\underline{\ell}\geq \underline{0}}
  a_{\underline{\ell}}
  t^{\ell_0}(t-\lambda_1)^{-\ell_1}\ldots(t-\lambda_{2g+1})^{-\ell_{2g+1}}|
  a_{\underline{\ell}} \in K, \exists \eta_0 >1, \\
  &|a_{\underline{\ell}} |\eta_0^{|\underline{\ell} |} \rightarrow
  _{|\underline{\ell}| \to \infty} 0 \big\}.
\end{split}
\end{equation*}

The space $B_c$ of analytic functions with compact support is by
definition the quotient of $\mathcal{R}_B$ by the image of a certain
map from $B^\dagger_K$ into $\mathcal{R}_B$ that we describe in the
following.  For $\lambda \in \Lambda$, let
\begin{equation}\label{philambda}
\phi_\lambda: B_K^\dagger \rightarrow R_\lambda
\end{equation}
be the injective map which sends $b_K \in B_K^{\dagger}$ to the
element $b_\lambda \in R_\lambda$ which is the local development of
$b_K$ at $\lambda$.

Let $i_D: B_K^\dagger \rightarrow \mathcal{R}_B$, be defined as $b_K
\mapsto \bigoplus_{\lambda \in \Lambda} \phi_\lambda(b_k)$. Obviously,
$i_D$ is an injection and by definition the space $B_c$ is the
quotient of $\mathcal{R}_B$ by the image of $i_D$. As a consequence,
we obtain the short exact sequence of $K$-vector spaces
\begin{equation}\label{suiex}
\xymatrix{
0 \ar[r] & B^\dagger_K \ar[r]^{i_D} & \mathcal{R}_B \ar[r]^{p_c} & B_c
\ar[r] & 0 \\
},
\end{equation}
where $p_c$ is the canonical projection.

Actually, the space $B_c$ comes with a natural structure of $B^\dagger_K$-module
that we describe now. To define it, we let
$$R^\dagger_\lambda=\big\{ \sum_{\ell < 0} a_n (t-\lambda)^\ell|a_\ell \in
K, \exists \eta_0 >1, |a_{-\ell}|\eta_0^{\ell} \rightarrow _{+\infty} 0 \big\},$$
and
$$\tilde{R}^\dagger_\lambda=\big\{ \sum_{\ell \leq 0} a_\ell (t-\lambda)^\ell|a_\ell \in
K, \exists \eta_0 >1, |a_{-\ell}|\eta_0^{\ell} \rightarrow _{+\infty}
0 \big\}.$$

And we put the
\begin{definition}\label{principal}
  Let $\Lambda_0=\Lambda \backslash \{ \infty \}$. The principal part at
  $\lambda \in \Lambda_0$ is the function defined by
\begin{center}
$\Pr_\lambda: R_\lambda \rightarrow R^\dagger_\lambda,$
\end{center}
$${\Pr}_\lambda(\sum_{\ell\in \Bbb{Z}} a_\ell (t-\lambda)^\ell)=\sum_{\ell < 0}
a_\ell (t-\lambda)^\ell.$$ 
The principal part at $\infty$ is the function defined by
$${\Pr}_\infty: R_\infty \rightarrow \tilde{R}^\dagger_\infty,$$
$${\Pr}_\infty(\sum_{\ell\in \Bbb{Z}} a_\ell t^{-\ell})=  \sum_{\ell
  \leq 0} a_\ell t^{-\ell}.$$
We also define the analytic part at any $\lambda \in \Lambda$ as the
identity minus the principal part at $\lambda$.
\end{definition}

For all $b_c \in B_c$, by the Mittag-Lefler theorem, there exists a
unique element $\sigma(b_c)$ of $\mathcal{R}_B$, such that
$p_c(\sigma(b_c))=b_c$ and for all $\lambda \in \Lambda$,
$Pr_\lambda(\sigma(b_c))=0$. In this way, we have defined a map
$\sigma$ from $B_c$ to $\mathcal{R}_B$ and it is immediately verified
that $\sigma$ is a section of $p_c$ in the exact sequence (\ref{suiex}).
We now identify $B_c$ with its image by $\sigma$ so that we can write
$$B_c = \oplus _{\lambda \neq \infty} \tilde{R}_{\lambda,c} \oplus R_{\infty,c}$$

where we denote for $\lambda \in \Bbb{P}^1_K$ rational
$$R_{\lambda,c}=\{\sum_{\ell> 0} a_\ell (t-\lambda)^\ell|a_\ell \in K, 
\forall \eta <1, |a_\ell|\eta ^{\ell} \rightarrow _{+\infty} 0\},$$
and
$$\tilde{R}_{\lambda,c}=\{\sum_{\ell\geq 0} a_\ell (t-\lambda)^\ell|a_\ell \in K, 
\forall \eta <1, |a_\ell|\eta ^{\ell} \rightarrow _{+\infty} 0\}.$$ It
is important for the following to remark that $\sigma \circ p_c$ is
given locally as the identity minus the local expansion of the sum of
all the principal parts.

The action of $B^\dagger_K$ over $B_c$ is given by
$$f.g=p_c(i_D(f).\sigma(g))$$
where $f\in B^\dagger_K$, $g \in B_c$ and $.$ is the product in
$\mathcal{R}_B$.

\subsection{The space $M_c$}
The computation of the space $H^1_{MW,c}(U_k/K)$ boils down to the
computation of the de Rham module of analytic forms with compact
support.  Applying the finite \'etale descent
theorem~\cite[Cor.2.6.6]{MR1728610}, this space of analytic functions
is
$$M_c=A_K^\dagger \otimes_{B^\dagger_K} B_c.$$
For $\lambda \in \Lambda_0$, let $M_{c,\lambda}=A_K^\dagger
\otimes_{B^\dagger_K} \tilde{R}_{\lambda,c}$ and let $M_{c,\infty}=A_K^\dagger
\otimes_{B^\dagger_K} R_{\infty,c}$. We have,
$$M_c=\bigoplus_{\lambda \in \Lambda} M_{c,\lambda}.$$ 
An element of $M_{c,\lambda}$ can be written as
$$m_{\lambda}=\sum_{j=0,1} Y^j \sum_{\ell=0}^{\infty}
b^{\lambda}_{j,\ell} (t - \lambda)^\ell.$$
with $b^{\lambda}_{j,\ell}\in K$ and with $b^{\infty}_{j,0}=0$. We keep
the convention of notation $(t-\infty)=t^{-1}$. 
The finite $B^\dagger_K$-module $M_c$ comes with a connection
given by 
$$\nabla_c:M_c \rightarrow M_c \otimes
_{B^\dagger_K} \Omega^1_{B^\dagger_K},$$
$$ m \otimes g_c \mapsto \nabla_{GM} (m) \otimes g_c + m \otimes
\frac{\partial}{\partial t}g_c.dt,$$ where $\nabla_{GM}$ is the
natural Gauss-Manin connection on $A^\dagger_K$. 
In our case this natural connection is given by the partial derivative
with respect to $Y$ acting on the $B^\dagger_K$-module $A^\dagger_K$.
By definition, the space $H^1_{MW,c}(V, \pi_* A^\dagger_K)$ is the
kernel of $\nabla_c$.  As a consequence to compute a basis of
$H^1_{MW,c}(V, \pi_* A^\dagger_K)$ we have to compute a basis of the
space of solutions of the differential equation
\begin{equation}\label{eqcon}
\nabla_c(m_c)=0
\end{equation}
defined over $M_c$. Note that by classical results (see for example
\cite{MR1354350}) the dimension of this space is equal to $2g$ plus
the number of points we took off the affine line, which in our case
gives $4g+1$.

\section{An algorithm to compute a basis of an overconvergent
  isocrystal}\label{secalgo1}

We show in this section that the solutions of Equation (\ref{eqcon})
can be computed by solving a linear system over $K$. First, we explain
how the action of a linear endomorphism of $M_c$ with rational
coefficients can be computed up to a certain analytic precision by
solving a system of linear equations. From this, we deduce two
methods, the global method given in Section \ref{globm} and the local
method presented in Section \ref{loc}, for the computation of a basis
of solutions of Equation (\ref{eqcon}). The global method is slow but
useful to compute the first analytic development of the solutions
required for the quicker local method.

\subsection{Action of a rational endomorphism of $M_c$}\label{glob}
Let $Mat$ be a square matrix of dimension $2$ with coefficients in the
field of rational functions in the indeterminate $t$ over $K$. If we
set $m_c=m\otimes g_c \in M_c=M\otimes B_c$ and using the basis $\{1,Y
\}$ of $M_c$ over $B_c$ to write $m_c$ as a column vector of dimension
$2$ with coefficients in $B_c$, our aim is to compute
$$Mat.m_c=(Mat.m)\otimes g_c.$$
We rewrite $Mat$ as the quotient of a matrix with polynomial
coefficients by a polynomial with the lowest possible degree denoted
by $\Delta$. We suppose that the roots of $\Delta$ are contained in
$\Lambda$.  Let $o_\lambda$ be the order of the roots of $\Delta$ at
the point $\lambda \in \Lambda_0$. Let $m_o= \max_{\lambda \in
  \Lambda_0}(o_\lambda)$.  For the rest of the section, we denote by
$M^\vee$ the transpose of a matrix $M$.

We explain how we associate vectors with coefficients in $K$ to
elements of $M_c$.
\begin{definition}\label{vect}
Let $n>0$ be a positive integer. Let $m_c \in M_c$ that we can write as 
$(m_{\lambda_1},\ldots,m_{\lambda_{2g+1}},m_{\infty})$ with
$$m_{\lambda_i}=\sum_{j=0,1} Y^j \otimes \sum_{\ell=0}^{\infty}
b^{\lambda_i}_{j,\ell} (t - \lambda_i)^\ell,$$ where
$b^{\lambda_i}_{j,\ell}\in W(k)$ and $b^{\infty}_{j,0}=0$, following
our conventions.  For all $\lambda \in \Lambda$, we let
\begin{equation}
v^\lambda_{m_c,n}=(b^{\lambda}_{0,0},b^{\lambda}_{0,1},\ldots
,b^{\lambda}_{0,n},b^{\lambda}_{1,0},\ldots,b^{\lambda}_{1,n})^\vee,
\end{equation}
and denote by $v_{m_c,n}$ the vector
\begin{equation}
v_{m_c,n}=(b^{\lambda_1}_{0,0},b^{\lambda_1}_{0,1},\ldots
,b^{\lambda_1}_{0,m_o+n},\ldots,b^{\lambda_1}_{1,m_o+n},  
\ldots,b^{\infty}_{1,m_o+n})^\vee,
\end{equation}
which can be written by blocks as
\begin{equation}\label{vector}
v_{m_c,n}=(v^{\lambda_1,\vee}_{m_c,m_0+n},\ldots,v^{\infty,\vee}_{m_c,m_0+n})^\vee.
\end{equation}
\end{definition}

\begin{definition}\label{mat+}
Let $n \geq 0$ be an integer. Let $h$ be a rational function in the
indeterminate $t$ with coefficients in $K$. Let $S_{h, \lambda}$ be
the Laurent series obtained by expending $h$ around $\lambda \in
\Lambda$. We write
$$S_{h,\lambda}=a_o(t-\lambda)^{-m_o}+a_1(t-\lambda)^{-m_o+1}+\ldots+a_{m_o+n}(t-\lambda)^{n}+\ldots$$ 
and define $M_{h,\lambda,n}^{+}$ a matrix of size $(n+1,m_o+n+1)$, by

$$\begin{array}{cccc}
M_{h,\lambda,n}^+ & = & \left(
\begin{array}{ccccccc}
a_{m_o} & \ldots & a_0 & 0 & 0 &\ldots & 0\\
a_{m_o+1} & \ldots & a_1 & a_0 & 0 & \ldots & 0\\
\ldots & \ldots & \ldots & \ldots & \ldots & \ldots & \ldots\\
a_{m_o+n} &\ldots &\ldots & \ldots & \ldots& \ldots &  a_0
\end{array}
\right) & \text{if } \lambda \neq \infty,\\
& & &\\

M_{h,\lambda,n}^+ & = & \left(
\begin{array}{cccccccc}
0 &0 & \ldots & 0 & 0 & 0 &\ldots & 0 \\
0 & a_{m_o} & \ldots & a_1 & a_0 & 0 & \ldots & 0 \\
\ldots & \ldots & \ldots & \ldots & \ldots & \ldots & \ldots & \ldots \\
0 & a_{m_o+n-1} &\ldots &\ldots & \ldots & \ldots& \ldots &  a_0 
\end{array}
\right) & \text{if } \lambda=\infty.
\end{array}$$

We let $M^{+}_{\lambda,n}$ be the block matrix obtained by replacing
in $Mat$ each of its coefficient equal to a rational function $h$ by
$M_{h,\lambda,n}^+$.
\end{definition}

\begin{definition}
  We keep the same notations as in the preceding definition. Let
  $\lambda' \neq \lambda \in K$. Let $h$ be a rational function in the
  indeterminate $t$. Write
  $$S_{h,\lambda}=a_o(t-\lambda)^{-m_o}+a_1(t-\lambda)^{-m_o+1}+\ldots+a_{m_o+n}(t-\lambda)^{n}+\ldots$$
  For $r \geq 0$ an integer, let $\eta
  ^r_0,\eta^r_1,\ldots,\eta^r_{n}$ be the $n+1$ first coefficients of
  the expansion of $\sum^r_{\ell=0} a_\ell(t-\lambda)^{-m_o+\ell}$
  around $\lambda'$ (it is a power series).  Let
  $M_{h,\lambda,n}^{-,\lambda'}$ be the matrix of dimension
  $(n+1,m_o+n+1)$ given by
$$\begin{array}{cccc}
M_{h,\lambda,n}^{-,\lambda'} & = & -\left(
\begin{array}{cccccc}
\eta^{m_o-1}_0 & \ldots & \eta^0_0 & 0 & \ldots & 0\\
\ldots & \ldots & \ldots & \ldots & \ldots &\ldots\\
\eta^{m_o-1}_{n} & \ldots &\eta^0_{n} & 0 & \ldots& 0
\end{array}
\right) & \text{if }\lambda \neq \infty,\\
& & & \\

M_{h,\lambda,n}^{-,\lambda'} & = & -\left(
\begin{array}{cccccc}
\eta^{m_o}_0 & \ldots & \eta^0_0 & 0 & \ldots & 0\\
\ldots & \ldots & \ldots & \ldots & \ldots &\ldots\\
\eta^{m_o}_{n} & \ldots &\eta^0_{n} & 0 & \ldots& 0
\end{array}
\right)& \text{if }\lambda=\infty.\\
\end{array}$$ 
Let $M^{-,\lambda'}_{\lambda,n}$ be the block matrix obtained by
replacing in $Mat$ each of its coefficient equal to a rational function $h$ by
$M_{h,\lambda,n}^{-,\lambda'}$.
\end{definition}
We remark that in this definition, if $\lambda'=\infty$, then the
first line of $M_{h,\lambda,n}^{-,\lambda'}$ is null, which
corresponds to our convention to set the constant term at the infinity
to zero. This convention implies that the $m_o^{th}$ column of
$M_{h,\lambda,n}^{-,\lambda'}$ is zero if $\lambda=\infty$.

\begin{definition}\label{Mn}
  Still keeping the same notations, for $n>0$ an integer, let $M_n$ be
  the matrix of dimension $(2(1+n)(2g+2),2(m_o+n+1)(2g+2))$ given by
$$M_n=\left(
\begin{array}{cccc}
M^+_{\lambda_1,n} & M^{-,\lambda_1}_{\lambda_2,n} & \ldots & M^{-,\lambda_1}_{\infty,n} \\
M^{-,\lambda_2}_{\lambda_1,n} & M^+_{\lambda_2,n} & \ldots & M^{-,\lambda_2}_{\infty,n} \\
\ldots & \ldots & \ldots & \ldots \\
M^{-,\infty}_{\lambda_1,n} & M^{-,\infty}_{\lambda_2,n} & \ldots & M^+_{\infty,n}
\end{array}
\right).$$
\end{definition}

We have
\begin{lemma}\label{l1}
  Let $Mat$ be defined as above. Let $m_c =m\otimes g_c \in M_c$, then
$$v_{Mat.m\otimes g_c,n-m_o}=M_n.v_{m\otimes g_c,n},$$
where $v$ is defined by (\ref{vector}).
\end{lemma}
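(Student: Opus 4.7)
My plan is a direct verification that decouples $Mat$ into its scalar entries and reduces the claim to a single local computation. Since both sides of the stated identity are $K$-linear in the entries of $Mat$, in the $Y^0,Y^1$-components of $m$, and in the local blocks of $g_c$ coming from the decomposition $B_c = \bigoplus_{\lambda \in \Lambda_0} \tilde{R}_{\lambda,c} \oplus R_{\infty,c}$, it is enough to fix a scalar rational function $h$ and a component $g_\lambda$ living in a single slot of this decomposition, and to show that the $\mu$-slot of $h \cdot g_\lambda \in B_c$, truncated to its first $n+1$ analytic coefficients, is recovered by $M^+_{h,\lambda,n}$ applied to the coefficient vector of $g_\lambda$ when $\mu = \lambda$ and by $M^{-,\mu}_{h,\lambda,n}$ applied to the same vector when $\mu \neq \lambda$.

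To unfold the $B^\dagger_K$-action I would use $h \cdot g_\lambda = p_c(i_D(h) \cdot \sigma(g_\lambda))$ together with the description of $\sigma \circ p_c$ as the identity minus the local expansion of the sum of principal parts (the remark following Definition~\ref{principal}). Because $\sigma(g_\lambda)$ is concentrated in the $\lambda$-slot of $\mathcal{R}_B$, the product $i_D(h) \cdot \sigma(g_\lambda)$ equals $S_{h,\lambda} \cdot g_\lambda$ in that slot and vanishes in every other slot. Setting $P_\lambda := \Pr_\lambda(S_{h,\lambda} \cdot g_\lambda)$, this yields
\[
\sigma(h \cdot g_\lambda)_\mu \;=\; \begin{cases} S_{h,\lambda} \cdot g_\lambda - P_\lambda & \text{if } \mu = \lambda, \\ -\phi_\mu(P_\lambda) & \text{if } \mu \neq \lambda. \end{cases}
\]

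For the diagonal case, the Cauchy product formula shows that the coefficient of $(t-\lambda)^k$ for $0 \leq k \leq n$ in $S_{h,\lambda} \cdot g_\lambda$ is the convolution of the Laurent coefficients of $h$ at $\lambda$ (starting at order $-m_o$) with $b^\lambda_0,\ldots,b^\lambda_{m_o+n}$, and matching this convolution directly against Definition~\ref{mat+} shows that it is exactly what $M^+_{h,\lambda,n}$ records. For the off-diagonal case, I would use linearity in $g_\lambda$ together with the observation that $(t-\lambda)^\ell \cdot S_{h,\lambda}$ has no principal part at $\lambda$ when $\ell \geq m_o$ to write
\[
P_\lambda \;=\; \sum_{\ell=0}^{m_o-1} b^\lambda_\ell \cdot \Pr_\lambda\bigl((t-\lambda)^\ell \cdot S_{h,\lambda}\bigr);
\]
expanding each summand around $\mu$ and reading off the first $n+1$ coefficients column by column produces the non-zero columns of $M^{-,\mu}_{h,\lambda,n}$, while the remaining columns are zero precisely because $b^\lambda_\ell$ with $\ell \geq m_o$ does not contribute to $P_\lambda$.

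The final point is to handle the conventions at $\infty$: the normalisation $b^\infty_{j,0} = 0$ from Definition~\ref{vect} and the fact that $\Pr_\infty$ retains the constant term (Definition~\ref{principal}) force the null first row of $M^+_{h,\infty,n}$ and the null column in position $m_o$ of $M^{-,\lambda'}_{h,\infty,n}$; the same two conventions also explain why the block $M^{-,\mu}_{h,\infty,n}$ carries one more apparent column $\eta^{m_o}$ than its finite counterpart. Summing all contributions over the pairs $(\lambda,\mu) \in \Lambda \times \Lambda$ then reassembles the block matrix $M_n$ of Definition~\ref{Mn} and produces the claimed identity. The main bookkeeping obstacle — and the origin of the asymmetric precisions $m_o+n$ on the input side and $n$ on the output side — is that the diagonal convolution with an order-$m_o$ pole consumes $m_o$ orders of analytic precision, whereas the off-diagonal blocks only feel the first $m_o$ input coefficients; keeping these two truncation regimes consistent is essentially the only delicate aspect of the verification.
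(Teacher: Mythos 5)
Your verification is correct and follows essentially the same route as the paper: unfold the $B^\dagger_K$-action on $B_c$ as multiplication by $i_D(h)$ in $\mathcal{R}_B$ followed by $\sigma \circ p_c$ (subtract the principal parts and re-expand them at the other points), then check that $M^+_{h,\lambda,n}$ records the truncated analytic part of the local product while $M^{-,\mu}_{h,\lambda,n}$ records the expansion at $\mu$ of the opposite of the principal part at $\lambda$. The paper asserts these two facts directly, so your proposal is the same argument with the linearity reduction, the Cauchy-product computation, and the conventions at infinity written out explicitly.
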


\begin{proof}
  We saw in the last section that the action of an overconvergent
  function $h \in B^\dagger_K$ on $g_c \in B_c$ is given by
  multiplying $i_D(h)$ by $\sigma(g_c)$ in the Robba ring
  $\mathcal{R}_B$ and then apply $\sigma \circ p_c$.  This last
  operation can be done by subtracting the sum of all the principal
  parts in all the components of $\sigma(B_c)$.

  Now, the matrix $M^+_{\lambda,n}$ is such that for any $m_c \in M_c$
  its product with the local component vector
  $v^{\lambda,\vee}_{m_c,n}$ gives the first $n+1$ terms of the
  analytic parts of the local product $Mat.m_\lambda$. In the same
  manner, the matrix $M^{-,\lambda '}_{\lambda,n,m_o+1}$ is such that
  its product with the local component vector
  $v^{\lambda,\vee}_{m_c,n}$ gives the first $n+1$ terms of the
  expansion locally around $\lambda '$ of the opposite of the
  principal part in $\lambda$ of the local product $Mat.m_\lambda$.
\end{proof}

\subsection{Global method}\label{globm}

We can use the notations introduced in Section \ref{glob} to rewrite the
differential equation (\ref{eqcon}). Here, we let $Mat$ be the matrix
$Mat_{\nabla_{GM}}$ of the Gauss-Manin connection for the basis $\{1,
Y\}$ of $M_c$ as a $B^\dagger_K$-module, so that we have $m_o=1$.

\begin{definition}\label{def1}
Let $n$ be a positive integer. Let $D_n$ be the matrix with dimension
$(2(1+n)(2g+2),2(n+2)(2g+2))$ given by 
$$D=\left(
\begin{array}{ccccc}
D_{\lambda_1,n} & 0 & \ldots & \ldots& 0 \\
0 & D_{\lambda_2,n} & 0 & \ldots & 0\\
\ldots & \ldots & \ldots & \ldots & \ldots\\
0 & \ldots & \ldots & \ldots & D_{\infty,n}\\
\end{array}
\right),
$$
where $D_{\lambda_i,n}$ is the diagonal block matrix with dimension
$(2(n+1),2(n+2))$ such that the diagonal blocks are given by the
$(n+1,n+2)$-matrices
$$\tilde{D}_{\lambda_i,n}=\left(
\begin{array}{ccccccc}
0 & 1 & 0 & \ldots & \ldots & \ldots & 0 \\
0 & 0 & 2 & 0 & \ldots & \ldots\\
\ldots & \ldots & \ldots & \ldots & \ldots & \ldots & \ldots\\
0 & \ldots & \ldots & 0 & n+1& 0 & \ldots \\
\end{array}
\right),$$
and where $D_{\infty,n}$ is the block diagonal $(2(n+1),2(n+2))$
matrix the blocks of which are all equal to the $(n+1,n+2)$ matrix
$$\tilde{D}_{\infty,n}=\left(
\begin{array}{ccccccc}
0 & \ldots & \ldots & \ldots & \ldots & \ldots & 0\\
0 & 0 & \ldots & \ldots & \ldots & \ldots & 0\\
0 & -1 & 0 & \ldots & \ldots & \ldots & 0\\
0 & 0 & -2 & 0 & \ldots & \ldots & 0\\
\ldots & \ldots & \ldots & \ldots & \ldots & \ldots & \ldots \\
0 & \ldots & \ldots & 0 & -(n-1) & 0 & \ldots \\
\end{array}
\right).$$
\end{definition}
We have the
\begin{proposition}
  Let $n>1$ be an integer. Let $m_c \in M_c$ be a solution of
  Equation (\ref{eqcon}). Then the vector $v_{m_c,n}$ is a solution of
  the linear system $(M_n+D_n).v=0$.
\end{proposition}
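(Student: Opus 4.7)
The plan is to translate the differential equation $\nabla_c(m_c)=0$ into its concrete meaning on local coefficients and then to recognise the two pieces as $M_n v_{m_c,n}$ and $D_n v_{m_c,n}$. Unwinding the definition of $\nabla_c$ on $M_c = A_K^\dagger \otimes_{B_K^\dagger} B_c$ in the basis $\{1,Y\}$, the equation $\nabla_c(m_c) = 0$ becomes
\begin{equation*}
Mat_{\nabla_{GM}} \cdot m_c \; + \; \partial_t(m_c) \; = \; 0
\end{equation*}
where both operators act componentwise on the $B_c$-part of $m_c$ (after identifying $M_c \otimes \Omega^1_{B^\dagger_K}$ with $M_c$ via $dt$). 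One has to note here that $\partial_t$ preserves principal parts, since $\partial_t(t-\lambda)^{-k}$ is again principal at $\lambda$ for $\lambda \neq \infty$ and $\partial_t(t^k)$ is again a polynomial for $k \geq 0$; hence $\partial_t$ descends to a well-defined operator on $B_c$ viewed through its section $\sigma$.

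Next, I would apply Lemma \ref{l1} to the matrix $Mat = Mat_{\nabla_{GM}}$. Here $m_o = 1$ because the connection matrix has denominator $2f(t) = 2\prod_i (t - \lambda_i)$ with only simple poles at the $\lambda_i$, so Lemma \ref{l1} yields $v_{Mat \cdot m_c,\, n-1} = M_n \cdot v_{m_c, n}$. The off-diagonal blocks $M^{-,\lambda'}_{\lambda,n}$ are precisely what account for the fact that the product $Mat \cdot m_\lambda$ creates principal parts at $\lambda$ which, once subtracted via the Mittag-Leffler splitting, must be re-expanded around each other $\lambda'$.

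The bulk of the proof is then a direct coefficient computation showing $v_{\partial_t m_c,\, n-1} = D_n \cdot v_{m_c, n}$, block-diagonal by $\lambda$. At a finite $\lambda = \lambda_i$, for each component in the basis $\{1,Y\}$ the series $\sum_{\ell \geq 0} b^{\lambda}_{\ell} (t-\lambda)^\ell$ has $t$-derivative $\sum_{k \geq 0} (k+1) b^{\lambda}_{k+1} (t-\lambda)^k$, which is exactly $\tilde{D}_{\lambda_i,n}$ applied to $(b^{\lambda}_0, \ldots, b^{\lambda}_{n+1})^\vee$. At $\lambda = \infty$ one uses $(t-\infty) = t^{-1}$ and $\partial_t(t^{-\ell}) = -\ell\, t^{-\ell - 1}$; with the convention $b^{\infty}_0 = 0$ this gives coefficient $-(k-1) b^{\infty}_{k-1}$ at $t^{-k}$ for $k \geq 2$ and $0$ for $k = 0,1$, matching the two initial zero rows and the subdiagonal of $\tilde{D}_{\infty,n}$.

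Putting the pieces together, $0 = v_{\nabla_c(m_c),\, n-1} = v_{Mat \cdot m_c,\, n-1} + v_{\partial_t m_c,\, n-1} = (M_n + D_n) v_{m_c, n}$. The only real obstacle is bookkeeping—keeping straight the shift of indexing by $m_o = 1$, the different conventions at finite points versus $\infty$, and verifying that the off-diagonal blocks in $M_n$ indeed reproduce the cross-redistribution of principal parts dictated by $\sigma \circ p_c$; this is, however, already subsumed in Lemma \ref{l1}, leaving only the derivative calculation above to check by hand.
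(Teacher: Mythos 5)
Your proof is correct and follows essentially the same route as the paper: the paper's own argument is exactly the combination of Lemma \ref{l1} applied to $Mat_{\nabla_{GM}}$ (with $m_o=1$) and the observation that $D_n$ realises $v_{m\otimes \frac{\partial}{\partial t}g_c,\,n-1}=D_n v_{m\otimes g_c,\,n}$. Your additional coefficient check of $\tilde D_{\lambda_i,n}$ and $\tilde D_{\infty,n}$ (including the convention $b^{\infty}_{0}=0$ and the two zero rows at infinity) simply spells out what the paper leaves implicit.
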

\begin{proof}
  By definition of $\nabla_c$, this is a consequence of Lemma \ref{l1}
  and of the fact that the matrix $D_n$ is such that $v_{m \otimes
    \frac{\partial}{\partial t}g_c , n-1}=D_n v_{m \otimes g_c,n}$.
\end{proof}

\subsection{Local method}\label{loc}

Let $\lambda \in \Lambda$. To simplify the exposition, we suppose in
the following that $\lambda \neq \infty$. The case $\lambda=\infty$
can be treated exactly in the same way. In the following,
$K((t-\lambda))$ denotes the field of Laurent series in the
indeterminate $(t-\lambda)$.

The local method rests on the remark that Equation (\ref{eqcon})
locally at $\lambda$ can be regarded as a classical inhomogeneous
differential equation if we know enough terms of the global solution.
This is the content of

\begin{proposition}\label{eqinhom}
  Let $m_c=(m_{\lambda_1},\ldots,m_{\lambda_{2g+1}},m_{\infty}) \in
  H^1_{MW,c}(V,\pi_* A^\dagger_K)$. For $\lambda \in \Lambda$, let
  $\nabla_{GM,\lambda}$ be the action of the Gauss-Manin connection on
  the local component in $\lambda$ of an element of $M_c$. For all
  $\lambda \in \Lambda$, there exists a unique $u=u_0+Yu_1$, with
  $(u_0,u_1) \in (K((t-\lambda)))^2$ such that $m_\lambda$ is a
  solution of a non-homogeneous differential equation:
\begin{equation}\label{leqinh}
\frac{\partial}{\partial t}m_\lambda+\nabla_{GM,\lambda}m_\lambda=u.
\end{equation}\end{proposition}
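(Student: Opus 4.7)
The plan is a direct computation at the chosen point $\lambda$. Write $m_\lambda=1\otimes g_{\lambda,0}+Y\otimes g_{\lambda,1}$ with $g_{\lambda,j}\in\tilde R_{\lambda,c}$ (or $R_{\infty,c}$ when $\lambda=\infty$), and set $P(t)=\prod_{i=1}^{2g+1}(t-\lambda_i)$. Using $\nabla_{GM}(1)=0$ together with $\nabla_{GM}(Y)=\tfrac{P'}{2P}Y\otimes dt$ (obtained from $Y^2=P$) and the termwise action of $\partial_t$ on the $B_c$-factor, the local expression of the left-hand side is
\[
\partial_t m_\lambda+\nabla_{GM,\lambda}m_\lambda \;=\; \partial_t g_{\lambda,0}\;+\;Y\bigl(\partial_t g_{\lambda,1}+\tfrac{P'(t)}{2P(t)}\,g_{\lambda,1}\bigr).
\]
I would then simply \emph{define} $u:=u_0+Yu_1$ by $u_0=\partial_t g_{\lambda,0}$ and $u_1=\partial_t g_{\lambda,1}+\tfrac{P'}{2P}g_{\lambda,1}$. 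Uniqueness is automatic: the equation forces $u$ to equal the left-hand side.

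The only substantive verification is that $u_0,u_1$ indeed lie in the Laurent series ring $K((t-\lambda))$. For finite $\lambda=\lambda_i$, the local expansion of $P'/(2P)$ at $\lambda_i$ has the form $\tfrac{1}{2(t-\lambda_i)}+(\text{regular})$, so $u_1$ has at most a simple pole at $\lambda_i$, while $u_0\in\tilde R_{\lambda,c}\subset K[[t-\lambda]]$. For $\lambda=\infty$ one passes to the local parameter $v=(t-\infty)=t^{-1}$ and uses $\partial_t=-v^2\partial_v$; since $\deg P=2g+1$, the expansion of $P'/(2P)$ vanishes to order one at infinity, so both $\partial_t$ and multiplication by $P'/(2P)$ preserve $R_{\infty,c}$, giving $u_0,u_1\in R_{\infty,c}\subset K[[v]]$. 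In all cases $u_0,u_1\in K((t-\lambda))$.

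The main obstacle is really just the bookkeeping for $\lambda=\infty$, where the local parameter is $t^{-1}$, where ``principal part'' of Definition \ref{principal} includes the constant term, and where $\partial_t$ has to be converted to a derivation in $v$ before checking membership in $R_{\infty,c}$. Note that the horizontality assumption $m_c\in\ker\nabla_c$ plays no logical role in establishing existence or uniqueness of $u$; rather, it will be used in the sequel to identify the Laurent coefficients of $u$ (in particular its principal part at $\lambda$) with contributions coming from the principal parts of the $\nabla_{GM,\mu}m_\mu$ for $\mu\neq\lambda$, via the explicit formula for $\sigma\circ p_c$ recalled in Section \ref{A_c}. This is what will make the local method of Section \ref{loc} effective, since it lets one compute $u$ to high analytic precision from only low-precision data at the other components.
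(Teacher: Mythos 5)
Your argument does prove the statement \emph{as literally worded}, but only by emptying it of content: you take $u$ to be, by definition, the left-hand side $\partial_t g_{\lambda,0}+Y(\partial_t g_{\lambda,1}+\tfrac{P'}{2P}g_{\lambda,1})$, so existence and uniqueness become tautological and, as you yourself note, the hypothesis $m_c\in\ker\nabla_c$ is never used. That is not what the paper's proof does, and the difference matters. The paper's proof uses horizontality together with the definition of the $B_K^\dagger$-action on $B_c$ (multiplication in $\mathcal{R}_B$ followed by $\sigma\circ p_c$, i.e.\ subtraction of all principal parts, cf.\ Section \ref{A_c}) to \emph{identify} $u$ explicitly: writing $\nabla_{GM}(Y^i)=f_iY^i$ with $f_i\in B_K^\dagger$, the vanishing of the $\lambda$-component of $\nabla_c(m_c)$ forces
$u=\sum_{i=0,1}Y^i\otimes\phi_\lambda\bigl(\sum_{\lambda'\in\Lambda}Pr_{\lambda'}(\phi_{\lambda'}(f_i)\,g_i^{\lambda'})\bigr)$,
i.e.\ $u$ is the expansion at $\lambda$ of the sum of the principal parts coming from \emph{all} the local components of $m_c$.

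This identification is the actual content of Proposition \ref{eqinhom} as the paper uses it: since $f_i$ has only simple poles, this $u$ depends only on the constant terms $g_i^{\lambda'}(0)$, which is exactly what makes Corollary \ref{initial} (injectivity of $\Phi$), Proposition \ref{meminhom} (the explicit bound $B$ via $u=\sum_{\lambda'}\tfrac{G(\lambda')}{\Delta'(\lambda')}c^{\lambda'}(t-\lambda')^{-1}$), and the local method of Section \ref{loc} work — one can compute $u$ to full analytic precision from precision-$1$ data and then prolong $m_\lambda$ by solving $Z'=AZ+B$. With your choice of $u$ the ``inhomogeneous equation'' is useless and even circular: to know your $u$ one would already need $m_\lambda$ to the target precision, and your version of Corollary \ref{initial} would not follow, since nothing pins down $u$ from the initial data. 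You correctly point to where this identification would come from ($\sigma\circ p_c$ and the principal parts at the other points), but you defer it to ``the sequel''; in the paper it is precisely the proof of this proposition, so as a replacement your proposal leaves that essential step unproven.
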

\begin{proof}
Let $m_c=(m_{\lambda_1},\ldots,m_{\lambda_{2g+1}},m_{\infty}) \in
H^1_{MW,c}(V,\pi_* A^\dagger_K)$. By definition, it satisfies the equation
$$\nabla_c(m_c)=0.$$
By rewriting each $m_\lambda$, $\lambda \in \Lambda$, as $$m_\lambda =
\sum_{i=0,1} Y^i \otimes g^\lambda_i,$$ where $g^{\lambda}_i \in
\tilde{R}_{\lambda,c}$ we have
\begin{equation*}
\begin{split}
\nabla_c (m_c)  = \left( \sum_{i=0,1} \nabla_{GM}(Y^i) \otimes
  g^{\lambda_1}_i+ \sum_{i=0,1} Y^i \otimes
  \frac{\partial}{\partial t}g^{\lambda_1}_i, \ldots, \right. \\ \left. \sum_{i=0,1} \nabla_{GM}(Y^i) \otimes
  g^{\infty}_i+ \sum_{i=0,1} Y^i \otimes
  \frac{\partial}{\partial t}g^{\infty}_i\right) dt.
\end{split}
\end{equation*}
Let $f_i \in B^\dagger_K$ be such that
\begin{equation*}
\begin{split}
\nabla_c (m_c)  = \left( \sum_{i=0,1} f_i. Y^i \otimes
  g^{\lambda_1}_i+ \sum_{i=0,1} Y^i \otimes
  \frac{\partial}{\partial t}g^{\lambda_1}_i,\ldots, \right. \\ \left. \sum_{i=0,1\
} f_i.Y^i \otimes
  g^{\infty}_i+ \sum_{i=0,1} Y^i \otimes
  \frac{\partial}{\partial t}g^{\infty}_i\right) dt.
\end{split}
\end{equation*}

Set $$u=\sum_{i=0,1} Y^i \otimes \phi_\lambda 
\left( \sum_{\lambda' \in \Lambda} Pr_{\lambda '}(\phi_{\lambda
    '}(f_i).g^{\lambda '}_i)\right),$$
where $\phi_\lambda$ is defined by (\ref{philambda}) and
$Pr_{\lambda}$ is given by Definition \ref{principal}.
We have
$$\frac{\partial}{\partial t}m_\lambda-\nabla_{GM,\lambda}m_\lambda=u,$$
by definition of $p_c$. This concludes the proof of the proposition.
\end{proof}

Let $m_c=(m_{\lambda_1},\ldots,m_{\lambda_{2g+1}},m_{\infty}) \in
H^1_{MW,c}(V,\pi_* A^\dagger_K)$. For $\lambda \in \Lambda$, write
$$m_\lambda = \sum_{i=0,1} Y^i \otimes g^\lambda_i,$$
where $g^{\lambda}_i \in \tilde{R}_{\lambda,c}$. Let $\Phi$ be the
morphism of $K$-vector spaces defined by $\Phi: H^1_{MW,c}(V,\pi_* A^\dagger_K)
\rightarrow (K[Y])^{2g+2}$,
$$m_\lambda = \sum_{i=0,1} Y^i \otimes g^\lambda_i \mapsto
(\sum_{i=0,1} Y^i \otimes g_i^{\lambda_1}(0), \ldots ,
\sum_{i=0,1} Y^i \otimes g_i^{\lambda_{2g+1}}(0),0).$$ 

\begin{corollary}\label{initial}
The map $\Phi: H^1_{MW,c}(V,\pi_* A^\dagger_K) \mapsto (K[Y])^{2g+2}$
is injective.
\end{corollary}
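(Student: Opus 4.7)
The plan is to use Proposition~\ref{eqinhom} at each puncture and reduce the injectivity to a termwise induction on the order of the Laurent coefficients. Let $m_c \in H^1_{MW,c}(V,\pi_* A_K^\dagger)$ satisfy $\Phi(m_c) = 0$, and write $m_\lambda = g_0^\lambda + Y \otimes g_1^\lambda$ for each $\lambda \in \Lambda$. The hypothesis translates to $g_0^\lambda(0) = g_1^\lambda(0) = 0$ for every $\lambda \in \Lambda_0$; the corresponding vanishing at infinity is already built into the definition of $R_{\infty,c}$. The first observation is that $\nabla_{GM}(1) = 0$, so the function $f_0$ appearing in the proof of Proposition~\ref{eqinhom} is zero, and hence the $Y^0$-component $u_0$ of the forcing vanishes identically. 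Thus $\partial_t g_0^\lambda = 0$ locally at each $\lambda$, so $g_0^\lambda$ is a constant in its local ring, and the vanishing hypothesis at the expansion point forces $g_0^\lambda = 0$ throughout.

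It remains to prove $g_1^\lambda = 0$ for all $\lambda$, which I do by strong induction on $\ell \geq 0$: I claim $b_{1,\ell}^\lambda = 0$ for every $\lambda \in \Lambda$. The base case $\ell = 0$ is the hypothesis. For the inductive step, assume $b_{1,\ell'}^{\lambda'} = 0$ for all $\lambda' \in \Lambda$ and all $\ell' < \ell$, so each $g_1^{\lambda'}$ begins at order $\geq \ell$ in its local parameter. With $h = P'(t)/(2P(t))$, the function $h$ has at worst a simple pole of residue $1/2$ at each $\lambda' \in \Lambda_0$ and a simple zero at $\infty$; therefore each product $\phi_{\lambda'}(h)\, g_1^{\lambda'}$ begins at order $\geq \ell - 1 \geq 0$. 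Its principal part thus vanishes at every $\lambda'$, and summing and re-expanding at $\lambda$ shows that the forcing $u_1^\lambda = \phi_\lambda\bigl(\sum_{\lambda'} \Pr_{\lambda'}(\phi_{\lambda'}(h)\, g_1^{\lambda'})\bigr)$ is zero.

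The equation thus reduces to the homogeneous one, and the indicial analysis at leading order closes the induction step. For $\lambda \in \Lambda_0$, reading the coefficient of $(t-\lambda)^{\ell-1}$ in $\partial_t g_1^\lambda + \phi_\lambda(h)\, g_1^\lambda = 0$ gives $(\ell + \tfrac{1}{2})\, b_{1,\ell}^\lambda = 0$, forcing $b_{1,\ell}^\lambda = 0$. For $\lambda = \infty$, passing to $s = t^{-1}$ one has $\partial_t = -s^2 \partial_s$ and $\phi_\infty(h) = \tfrac{2g+1}{2} s + O(s^2)$; the coefficient of $s^{\ell+1}$ becomes $(\tfrac{2g+1}{2} - \ell)\, b_{1,\ell}^\infty = 0$, which again forces $b_{1,\ell}^\infty = 0$ since $(2g+1)/2$ is a half-integer. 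The main technical obstacle is the careful bookkeeping at $\infty$, where the change of local parameter must be made compatible with the conventions for $\Pr_\infty$, $\phi_\infty$ and the shifted indexing in $R_{\infty,c}$; the essential point throughout is that the regular-singular exponents encountered at every puncture ($-1/2$ at finite points and $(2g+1)/2$ at infinity) are not non-negative integers, which is exactly what keeps the indicial equation invertible at every level of the induction and forces $m_c = 0$.
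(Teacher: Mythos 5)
Your proof is correct and follows essentially the same route as the paper: reduce to the local first-order inhomogeneous equations of Proposition~\ref{eqinhom}, observe that the forcing is determined by the constant terms recorded in $\Phi(m_c)$, and conclude that each local component is determined by (indeed vanishes with) its constant term. The only difference is that you actually verify the uniqueness step the paper asserts in one line, via the coefficient recursion with indicial factors $\ell+\tfrac{1}{2}$ at finite points and $\tfrac{2g+1}{2}-\ell$ at infinity, which is a welcome (and correct) elaboration rather than a different argument.
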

\begin{proof}
  Let $m_c=(m_{\lambda_1},\ldots,m_{\lambda_{2g+1}},m_{\infty}) \in
  H^1_{MW,c}(V,\pi_* A^\dagger_K)$. By Proposition \ref{eqinhom}, for
  all $\lambda \in \Lambda$, $m_\lambda$ satisfies a degree $1$ local
  inhomogeneous differential equation and there is a unique solution
  of this equation with constant term given by $\Phi(m_c)$.
\end{proof}

We have to check that the solution that we obtain satisfies the
condition that the constant term at the infinity point is zero.
\begin{definition}
Let $n>0$ be an integer. Let $Rel_n$ be the sub-K-vector space of
$K^{2(2g+2)(n+2)}$ generated by the vectors
$$(e_1,\ldots,e_{2(2g+2)(n+2)})^\vee$$ such that the $e_i$ are
zero for $$i \in \{2((2g+1)(n+2)+1+j(n+2)|j \in
\{0,1\}\}.$$ 
\end{definition}

The linear system we associated to the equation $$\nabla_c(m_c)=0$$ admits
trivial solutions that we have to put aside. These solutions come from
the fact that the derivation with respect to $t$ increases the degree
locally at the point at infinity. For instance, when truncating above the
degree $n>0$, then $(0,\ldots,0,1\otimes t^{-n})$ corresponds always to a solution of
the linear system even though in general it is not a solution of the
equation $\nabla_c(m_c)=0$.

\begin{definition}\label{Triv}
Let $n>0$ be an integer. Let $Triv_n$ be the sub-K-vector-space of
$K^{2(2g+2)(n+2)}$ spanned by the vectors
$$(e_1,\ldots,e_{2(2g+2)(n+2)})^\vee$$ such that the $e_i$ are 
zero, except for $$i \in \{2(2g+1)(n+2)+j(n+3-s)|j \in
\{1,2\}\text{, }s \in \{1,2\}\}.$$ 
\end{definition}

In order to get rid of the terms of $Triv_n$, we simply truncate the
solutions and keep only the resulting independent vectors with
coefficients in $K$. It would be enough to truncate only the local
part at infinity, but we truncate globally for the sake of clarity.
Note also that by Corollary \ref{initial}, a solution with all local
constant terms equal to zero is globally equal to zero, so that we
don't drop any valid solution by truncating.

\begin{definition}
If $n>2$ and if $v=(v_1,\ldots,v_{2(2g+2)(n+2)})$ is an element of
$K^{2(2g+2)(n+2)}$ we let
$$Tronc(v,n)=(v_1,\ldots,v_{n-2},v_{n+1},\ldots,v_{2n-2},v_{2n+1},\ldots,v_{2(2g+2)(n+2)-2})^\vee.$$
In particular, we have
$$Tronc(v_{m_c,n},n)=v_{m_c,n-2}.$$
\end{definition}
 
This way, we have
\begin{proposition}\label{soltronc}
  Let $n>0$ be an integer. Let $v$ be a solution of the linear system
$$(M_{n+2}+D_{n+2})v=0$$ in $Rel_n$. Then there exists a
unique element $m_c \in \oplus_\Lambda M_{c,\lambda} \otimes
K[[t-\lambda]]$ solution of the Equation (\ref{eqcon}) such that 
$$v_{m_c,n}=Tronc(v,n)$$ where we have generalized in an evident manner
the definition $v_{m,n}$ to $\oplus_\Lambda M_{c,\lambda} \otimes
K[[t-\lambda]]$. More precisely, there exists 
$v_{Triv} \in Triv_{n+2}$ such that  $v_{m,n+2}=v+v_{Triv}$.
\end{proposition}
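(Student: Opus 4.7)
The plan is to use the local inhomogeneous ODEs of Proposition \ref{eqinhom} to extend $v$, which encodes only finitely many coefficients, into a full formal power-series solution of $\nabla_c(m_c)=0$, and then to compare the resulting $v_{m_c,n+2}$ with the given $v$.

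For the construction of $m_c$, the coordinates of $v$ yield at every $\lambda \in \Lambda$ a proposed finite truncation of the local component $m_\lambda$. In particular I read off the constant terms $(b_{0,0}^{\lambda_i}, b_{1,0}^{\lambda_i})$ at each finite $\lambda_i$ together with the forced pair $(0,0)$ at $\infty$, which is exactly the condition $v \in Rel_n$. I then extend each $m_\lambda$ into a formal power series in $K[[t-\lambda]]$ simultaneously, order by order: having produced all $m_\lambda$ up to order $\ell$, the local inhomogeneous equation \eqref{leqinh} of Proposition \ref{eqinhom} recovers the coefficient of $(t-\lambda)^{\ell+1}$ in $m_\lambda$ from the previously constructed coefficients of $m_\lambda$ together with the order-$\ell$ coefficient of the inhomogeneous term $u_\lambda$. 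The explicit formula for $u_\lambda$ in the proof of Proposition \ref{eqinhom} shows that its order-$\ell$ coefficient depends only on finitely many, already-built coefficients of the neighbouring $m_{\lambda'}$, so the induction goes through and produces a unique $m_c \in \bigoplus_\Lambda M_{c,\lambda}\otimes K[[t-\lambda]]$ satisfying $\nabla_c(m_c) = 0$.

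For the comparison with $v$, by Lemma \ref{l1} and the way $D_{n+2}$ encodes $\partial/\partial t$ (Definition \ref{def1}), the system $(M_{n+2}+D_{n+2})w=0$ is precisely the truncation of $\nabla_c(m_c)=0$ to the relevant precision, and $v_{m_c,n+2}$ itself solves it. Since $v$ and $v_{m_c,n+2}$ share the same initial data and satisfy the same finite collection of relations, the inductive construction of the existence step forces them to agree on every coordinate that $(M_{n+2}+D_{n+2})$ actually constrains. The coordinates that it does not constrain are precisely the top-order coefficients at $\infty$, because the block $\tilde D_{\infty,n}$ of Definition \ref{def1} shows that $\partial/\partial t$ \emph{lowers} the degree at infinity, so the highest-order coefficients at $\infty$ do not appear in any row of the system. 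Matching these unconstrained coordinates against Definition \ref{Triv} gives $v - v_{m_c,n+2} \in Triv_{n+2}$, and truncating by $Tronc(\cdot,n)$ drops exactly these coordinates to yield $v_{m_c,n} = Tronc(v,n)$.

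The main obstacle is the precision bookkeeping in the comparison step: one must verify that $(M_{n+2}+D_{n+2})$ constrains every coordinate outside the span described in Definition \ref{Triv}, neither too many nor too few, and to do this honestly one has to stare at the block structure of $M_{n+2}$ and $D_{n+2}$ and, in particular, at the asymmetric handling of the point at infinity already visible in Definitions \ref{mat+} and \ref{def1}.
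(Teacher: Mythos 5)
Your argument is essentially the paper's: its proof is a one-line appeal to Corollary \ref{initial} (an element of the kernel of $\nabla_c$ is uniquely determined by its local constant terms, via the inhomogeneous equations of Proposition \ref{eqinhom}), which is exactly the mechanism driving your order-by-order construction and your comparison step. You supply more detail than the paper does — the explicit recursion and the identification of the unconstrained top-order coordinates at infinity with $Triv_{n+2}$ — but the underlying idea is the same.
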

\begin{proof}
From Corollary \ref{initial}, a solution is uniquely determined
by its first terms.
\end{proof}

An approximation of a formal solution of a differential equation can
be computed by one or the other of the above explained methods, but
nothing has been said, up to now, about the convergence of the solutions
that we approximate. We have yet to prove that a formal solution of
the differential equation is in $M_c$. This can be done by finding
some conditions for the convergence on the coefficients of a formal
solution.

\subsection{Explicit upper bound of the coefficients of a basis of the
  cohomology}
We present an upper bound on the valuation of the coefficients of a
basis of the space $H^1_{MW,c}(V)=H^1_{MW,c}(U,\pi_*A^\dagger_K)$.
In this section, $v_p$ is the usual $p$-adic valuation on
$K$.

\begin{proposition}\label{meminhom}
  Let $m_c=(m_{\lambda_1},\ldots,m_{\lambda_{2g+1}},m_{\infty})$ 
  be an element of $H^1_{MW,c}(U,\pi_*A^\dagger_K)$. Let $u=u^0+u^1.Y$, 
  with for $j=0,1$, $u^j=\sum_{\ell=0}^\infty u_\ell^j (t-\lambda)^\ell$. 
  Suppose that $m_\lambda$ is a solution of the equation
$$\frac{\partial}{\partial t}m_\lambda+ \nabla_{GM,\lambda}m_\lambda=u,$$
then there exists a real number $B>0$ such that for $j=0,1$ and all
$\ell$
$$v_p(u^j_\ell)>B.$$
\end{proposition}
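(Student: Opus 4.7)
The plan is to unpack the explicit description of $u$ given in the proof of Proposition~\ref{eqinhom}—namely $u=\sum_{i=0,1}Y^i\otimes\phi_\lambda\bigl(\sum_{\lambda'\in\Lambda}\Pr_{\lambda'}(\phi_{\lambda'}(f_i)g_i^{\lambda'})\bigr)$—and to estimate its Taylor coefficients at $\lambda$ one piece at a time. The finite-dimensional input data is drastically cut down by the hyperelliptic setting: $\nabla_{GM}$ kills $1$ and acts on $Y$ by $f_1=P'(t)/(2P(t))=\sum_{k=1}^{2g+1}1/(2(t-\lambda_k))$, where $P(t)=\prod_k(t-\lambda_k)$. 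So $f_1$ has only simple poles at the finite $\lambda_k$ and vanishes at infinity. Hence $u^0=0$, each $\Pr_{\lambda_k}(\phi_{\lambda_k}(f_1)g_1^{\lambda_k})$ collapses to the single rational expression $g_1^{\lambda_k}(\lambda_k)/(2(t-\lambda_k))$, and $\Pr_\infty(\phi_\infty(f_1)g_1^\infty)=0$ because $g_1^\infty\in R_{\infty,c}$ has no constant term.

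With these simplifications, $u^1$ is the Taylor expansion at $\lambda$ of a finite linear combination, indexed by $\Lambda_0\setminus\{\lambda\}$, of simple poles $g_1^{\lambda'}(\lambda')/(2(t-\lambda'))$. I would then combine two elementary bounds. First, the finitely many scalars $g_1^{\lambda'}(\lambda')$ are specific elements of $K$ determined by the fixed $m_c$, and thus admit a common lower bound $v_0$ on their $p$-adic valuations. Second, the Taylor coefficients of $(t-\lambda')^{-1}$ at $\lambda$ are $-(\lambda-\lambda')^{-m-1}$, and because the $\lambda_k$ are Witt-vector lifts of pairwise distinct elements of $k$, we have $v_p(\lambda-\lambda')=0$ whenever $\lambda\neq\lambda'$, so each such coefficient has valuation exactly zero. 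By the strong triangle inequality applied to the finite sum over $\lambda'$, one obtains $v_p(u_\ell^1)\geq v_0-v_p(2)$ uniformly in $\ell$, and any $B$ strictly smaller than this quantity meets the requirement.

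The step that requires the most care is the boundary case $\lambda=\infty$: the local parameter is $t^{-1}$, and the re-expansion of a finite pole $(t-\lambda_k)^{-1}$ at infinity takes the form $-\sum_{m\geq 0}\lambda_k^m\,t^{-m-1}$. One then has to observe separately that $v_p(\lambda_k^m)=m\cdot v_p(\lambda_k)\geq 0$ keeps the coefficients bounded below, so the previous argument carries through verbatim. A subsidiary point—ultimately inert for the estimate—is the interpretation of the $\lambda'=\lambda$ term in the sum defining $u$: its principal part at $\lambda$ cancels exactly against the principal part of $\phi_\lambda(f_1)g_1^\lambda$ appearing on the left-hand side of equation (\ref{leqinh}), so after this cancellation $u$ is genuinely the power series described above and the estimate extends without modification.
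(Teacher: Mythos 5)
Your proof is correct and follows essentially the same route as the paper: you identify $u$ as a sum of simple poles $c^{\lambda'}/(t-\lambda')$ weighted by the constant terms of the local components (the paper writes this as $\sum_{\lambda'}\frac{G(\lambda')}{\Delta'(\lambda')}c^{\lambda'}(t-\lambda')^{-1}$, which in the hyperelliptic case is exactly your residue $1/2$ computation), and then bound the re-expanded coefficients using $v_p(\lambda-\lambda')=0$ and the ultrametric inequality. Your explicit treatment of the point at infinity and of the $\lambda'=\lambda$ term is a welcome tightening of details the paper leaves implicit, but it is the same argument.
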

\begin{proof}
Let $m_c=(m_{\lambda_1},\ldots,m_{\lambda_{2g+1}},m_{\infty}) \in
H^1_{MW,c}(V,\pi_* A^\dagger_K)$ with 
$$m_{\lambda_i}=\sum_{j=0,1} Y^j \sum_{\ell=0}^{\infty}
b^{\lambda_i}_{j,\ell} (t - \lambda_i)^\ell,$$ with
$b^{\lambda_i}_{j,\ell}\in K$ and with $b^{\infty}_{j,0}=0$ (still
keeping the convention $t-\infty=t^{-1})$. Let $\lambda \in \Lambda$.
Writing the Gauss-Manin connection as a quotient:
 $$\nabla_{GM}=\frac{G(t)}{\Delta(t)},$$
 where $G(t)$ is a linear transformation which can be written in the
 basis $\{ 1, Y \}$ as a $(2,2)$-matrix with coefficients in $W(k)[t]$
 and $\Delta(t)\in W(k)[t]$ has simple roots. By Proposition
 \ref{eqinhom}, and because each $\lambda' \in \Lambda_0$ is at most a
 simple root of $\Delta$, the vector $m^\lambda$ satisfies the
 equation
$$\frac{\partial}{\partial t}m^\lambda + \nabla_{GM, \lambda}m^\lambda = u,$$
with $$u=\sum _{\lambda' \in \Lambda}
\frac{G(\lambda')}{\Delta'(\lambda')} c^{\lambda'}
(t-\lambda')^{-1},$$ where $c^{\lambda'}=b^{\lambda'}_{0,0}+Y b^{\lambda'}_{1,0}$.

We have for all $\lambda,\lambda' \in \Lambda_0$,
$$v_p(\lambda-\lambda')=0$$
by hypothesis. As a consequence, if one writes $u=u^0+Yu^1$,
with for $j=0,1$,
$u^j=\sum u_\ell^j (t-\lambda)^\ell$, then
\begin{equation}\label{leu}
  v_p(u_\ell^j) \geq \min_{\lambda' \in \Lambda}\left(v_p\left( 
\frac{G(\lambda')}{\Delta'(\lambda')} c^
      {\lambda'}\right) \right),
\end{equation}
where we extend $v_p$ on vectors with coefficients in $W(k)$ by taking
the minimum of the valuation of the components.

Equation (\ref{leu}) follows the remark that expanding in a
neighbourhood of $\lambda$, we find
$$(t-\lambda')^{-1}=-\sum_{\ell=0}^\infty
\frac{1}{(\lambda'-\lambda)^{\ell+1}}(t-\lambda)^\ell.$$
As a consequence, we can take 
$$B= \min_{\lambda' \in \Lambda}\left(v_p\left( 
    \frac{G(\lambda')}{\Delta'(\lambda')} c^ {\lambda'}\right)
\right),$$ in the statement of the theorem.
\end{proof}

We have also
\begin{theorem}\label{majc}
Let $m_c=(m_{\lambda_1},\ldots,m_{\lambda_{2g+1}},m_{\infty})$ be an element
of $H^1_{MW,c}(V,\pi_* A^\dagger_K)$ with
$$m_{\lambda_i}=\sum_{j=0,1} Y^j \sum_{\ell=0}^{\infty}
b^{\lambda_i}_{j,\ell} (t - \lambda_i)^\ell,$$
with $b^{\lambda_i}_{j,\ell}\in W(k)$ and with
$b^{\infty}_{j,0}=0$. Let $\lambda \in \Lambda$. 
Then there exist $\alpha \in
\Bbb{R}$ and $\beta \in \Bbb{R}$ such that
\begin{equation}\label{eqmaj}
v_p (b^\lambda_{j,\ell}) \geq -(\alpha \log_p(\ell) + \beta),
\end{equation}
for all $j$ and all $\ell$. Moreover, $\alpha$ and $\beta$ can be made
explicit (see the Remark \ref{remab}).
\end{theorem}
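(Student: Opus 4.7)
The approach is to use the local inhomogeneous differential equation of Proposition \ref{eqinhom}, whose right-hand side is controlled by Proposition \ref{meminhom}, to derive a recurrence on the coefficients $b^\lambda_{j,\ell}$, and then bound the $p$-adic denominators this recurrence introduces.

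First I would make the Gauss-Manin connection explicit. Since $A^\dagger_K = B^\dagger_K[Y]/(Y^2-f)$ with $f = \prod_{i=1}^{2g+1}(t-\lambda_i)$, the relation $Y^2 = f$ gives $dY = (f'/(2f))\,Y\,dt$, so the matrix of $\nabla_{GM}$ in the basis $\{1,Y\}$ is diagonal with entries $0$ and $f'/(2f)$. At $\lambda = \lambda_j \in \Lambda_0$, writing $f(t) = (t-\lambda)g(t)$ with $g(\lambda) \in W(k)^\times$ (which holds because the hypothesis $v_p(\lambda_i-\lambda_{i'})=0$ used in Proposition \ref{meminhom} makes the branch points distinct mod $p$), one has $f'/f = 1/(t-\lambda) + g'/g$, so the action of $\nabla_{GM,\lambda}$ on the $Y$-component is $\tfrac{1}{2(t-\lambda)} + \tfrac{g'(t)}{2g(t)}$, with the second summand analytic at $\lambda$ and having coefficients in $W(k)$.

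Substituting into $\partial_t m_\lambda + \nabla_{GM,\lambda} m_\lambda = u$, writing $u = u_0 + Y u_1$, and multiplying the $Y$-component equation by $2(t-\lambda)$ to clear the pole, expansion in powers of $(t-\lambda)$ yields the coupled recurrences
$$\ell\, b^\lambda_{0,\ell} = u_{0,\ell-1},\qquad (2\ell+1)\, b^\lambda_{1,\ell} = 2\, u_{1,\ell-1} - \sum_{k=0}^{\ell-1} \gamma_{\ell-k}\, b^\lambda_{1,k},$$
where $(t-\lambda)g'(t)/g(t) = \sum_{m\geq 1}\gamma_m (t-\lambda)^m$; the case $\lambda = \infty$ is handled symmetrically with $t^{-1}$ as local parameter. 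Proposition \ref{meminhom} provides $v_p(u_{j,\ell}) \geq B$, and $v_p(\gamma_m) \geq 0$ since each $\lambda-\lambda_i$ is a $p$-adic unit. The first recurrence immediately gives $v_p(b^\lambda_{0,\ell}) \geq B - v_p(\ell) \geq B - \log_p(\ell)$. For the second, inverting $(2\ell+1)$ costs at most $\log_p(2\ell+1)$ of precision per step; tracking the cumulative denominators, together with the initial datum $v_p(b^\lambda_{1,0})$ and the explicit bounds on $\gamma_m$ and on the coefficients of $G$ and $\Delta$, produces a lower bound of the asserted form, with $\alpha$ and $\beta$ expressible through these quantities as will be recorded in Remark \ref{remab}.

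The main obstacle is that the naive inductive analysis loses $v_p(2\ell+1)$ per step, which accumulates over $\ell$ iterations to $v_p((2\ell+1)!!) = \Theta(\ell/(p-1))$, a linear rather than logarithmic loss. To sharpen this into the stated logarithmic bound one must exploit the specific form of the forcing $u = \sum_{\lambda' \in \Lambda} \tfrac{G(\lambda')}{\Delta'(\lambda')}\, c^{\lambda'}(t-\lambda')^{-1}$ made explicit in the proof of Proposition \ref{meminhom}: the denominators produced by iterating the recurrence partially cancel against the numerators arising from expanding $(t-\lambda')^{-1}$ around $\lambda$. The variant of Lauder's estimate from \cite{MR2261044} mentioned in the introduction supplies the refined control over the resolvent of the recurrence that isolates the purely logarithmic contribution.
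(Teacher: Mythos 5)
Your set-up is fine and in fact closer to an explicit computation than the paper's own argument: the connection matrix, the simple pole with residue $\tfrac12$ at each $\lambda$, and the coupled recurrences $\ell\,b^\lambda_{0,\ell}=u_{0,\ell-1}$, $(2\ell+1)\,b^\lambda_{1,\ell}=\cdots$ are all correct, and you correctly identify the crux: naively the factors $(2\ell+1)$ accumulate a valuation loss of order $\ell/(p-1)$, linear rather than logarithmic. The problem is that you never actually cross this gap. The mechanism you propose --- cancellation of the accumulated denominators against ``numerators arising from expanding $(t-\lambda')^{-1}$ around $\lambda$'' --- cannot work: since $v_p(\lambda-\lambda')=0$, the coefficients of that expansion are $p$-adic units (this is exactly what Proposition \ref{meminhom} uses to get the uniform bound $B$ on $u$), so the forcing term contributes no positive valuation to compensate the small divisors. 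The obstruction is already present in the homogeneous part of the equation, and no property of the inhomogeneous term can remove it.

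What actually produces the logarithmic bound, and what the paper does, is of a different nature: one equips $(\pi_*A^\dagger_K,\nabla_{GM})$ (and its dual, via $F_{GM}^{-1}$) with a Frobenius structure and applies Dwork's trick, which transfers convergence from a small disc to the full open unit disc at the cost of only $O(\log_p\ell)$ in valuation; combined with the fact that the local exponents $\{0,-\tfrac12\}$ are prepared and with the bound $B$ of Proposition \ref{meminhom} on $u$, this verifies the hypotheses of Theorem 2.3.3 of \cite{ThChat} (the variant of Lauder's estimate, \cite[Note 4.11]{MR2261044}), which is then quoted as a black box and yields the explicit $\alpha,\beta$ of Remark \ref{remab}. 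Your closing sentence gestures at this result but treats it as a refinement of the recurrence analysis, without introducing the Frobenius structure or checking any of its hypotheses; since that Frobenius/Dwork input is precisely the idea that converts the linear loss into a logarithmic one, the proposal as written does not prove the theorem.
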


\begin{proof}
  We prove that the hypothesis of the Theorem 2.3.3 of
 ~\cite{ThChat} are verified. By a
  direct computation, we find that the local exponents of
  $Mat_{\nabla_{GM}}$ are in $\{0,-\frac{1}{2}\}$, so that they are
  prepared and the hypothesis $1$ is satisfied.  The hypothesis $4$ is
  already contained in our statement. The hypothesis $2$ can be
  checked by applying the classical Dwork's trick (see for example 
\cite{MR0498577}, Proposition 3.1). In order to be able to apply
  this result, it is necessary to provide the $B^\dagger_K$-module
  $(\pi_*A^\dagger_K,\nabla_{GM})$ with a Frobenius morphism. Fix a
  Frobenius morphism $F$ on $A^\dagger_K$ lifting the $p^{th}$ power
  on $A_k$ such that $F$ sends $X$ over $X^p$ and acting on $K$ as the
  Frobenius automorphism. Then $F$ induces a Frobenius morphism
  $F_{GM}$ over $(\pi_*A^\dagger_K,\nabla_{GM})$ and the Dwork's trick
  applies. In the same manner, we provide the dual module with connection
  $(\pi_* A^\dagger_K,\nabla^\vee)$ with the Frobenius morphism
  $F_{GM}^{-1}$.  The hypothesis $3$ is true from Proposition
  \ref{meminhom}, and the hypothesis $5$ is easily verified in our
  case. The expression of $B$ given in the proof of the
  Proposition \ref{meminhom} gives the theorem.
\end{proof} 

The following proposition proves the correctness of the algorithm
described in Section \ref{thealgo}. Taking back the notation of the
Section \ref{secalgo1},
\begin{proposition}\label{propDWT}
  Let $v$ be a solution of the linear system $(M_n+D_n)v=0$ belonging
  to $Rel_n$ and let $m \in \oplus_\Lambda M_{c,\lambda} \otimes
  K[[t-\lambda]]$ be the unique solution of the Equation (\ref{eqcon})
  such that $v_{m,n}=Tronc(v,n)$. Then $m$ is an element of $M_c$.
\end{proposition}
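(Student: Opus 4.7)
The goal is to upgrade the formal power series solution $m$ to an honest element of $M_c$, which amounts to showing that each local coefficient $b^\lambda_{j,\ell}$ satisfies $|b^\lambda_{j,\ell}|\eta^\ell \to 0$ for every $\eta < 1$ (and the mirrored condition at infinity). My plan is to deduce this from a logarithmic lower bound on $v_p(b^\lambda_{j,\ell})$, obtained by arranging that the hypotheses of Theorem 2.3.3 of \cite{ThChat} — the Dwork-style result already exploited in Theorem \ref{majc} — apply to our formal $m$.

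First I would reduce to the case where the initial constants $b^{\lambda'}_{j,0}$ of $m$ (for $\lambda' \in \Lambda_0$, $j \in \{0,1\}$) all lie in $W(k)$. Since there are only finitely many such constants, rescaling $m$ by a sufficiently large power of $p$ achieves this while preserving the property that $m$ is a formal solution of (\ref{eqcon}); convergence properties are unaffected by this scalar multiplication. The condition $v \in Rel_n$ ensures that the corresponding constant terms at infinity vanish, so the structural requirements built into $M_c$ are already met formally, and Corollary \ref{initial} confirms that the whole formal solution $m$ is pinned down by these finitely many initial data.

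Next I would apply Proposition \ref{eqinhom} at every $\lambda$: each $m_\lambda$ satisfies an inhomogeneous equation $\partial_t m_\lambda + \nabla_{GM,\lambda} m_\lambda = u_\lambda$. The proof of Proposition \ref{meminhom} gives the explicit formula $u = \sum_{\lambda' \in \Lambda} \frac{G(\lambda')}{\Delta'(\lambda')} c^{\lambda'} (t-\lambda')^{-1}$ expanded around $\lambda$, with $c^{\lambda'} = b^{\lambda'}_{0,0} + Y b^{\lambda'}_{1,0}$. After the rescaling, each $c^{\lambda'}$ lies in $W(k) \oplus Y\cdot W(k)$; since the points of $\Lambda_0$ are pairwise congruent to units modulo $p$, the same estimate as in Proposition \ref{meminhom} furnishes a uniform lower bound $B$ on the $p$-adic valuations of the Taylor coefficients of $u_\lambda$ — crucially, this bound depends only on the initial constants, not on any a priori knowledge that $m$ lives in $H^1_{MW,c}$.

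Finally I would verify the five hypotheses of Theorem 2.3.3 of \cite{ThChat} exactly as in the proof of Theorem \ref{majc}: hypotheses 1, 2 and 5 depend only on the Gauss-Manin connection and its Frobenius structure; hypothesis 4 is satisfied by the finite set of initial coefficients, which now lie in $W(k)$; and hypothesis 3 is precisely the bound from the previous paragraph. The theorem then delivers a logarithmic bound $v_p(b^\lambda_{j,\ell}) \geq -(\alpha \log_p \ell + \beta)$, which gives $|b^\lambda_{j,\ell}| \leq p^\beta \ell^\alpha$; hence $|b^\lambda_{j,\ell}| \eta^\ell \to 0$ for every $\eta < 1$, so each $m_\lambda$ lies in $\tilde{R}_{\lambda,c}$ (respectively $R_{\infty,c}$), and $m \in M_c$. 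The main obstacle is precisely that the Dwork-style theorem must be applied to a formal solution rather than one known a priori to live in the cohomology, which is why the reduction to $W(k)$-valued initial data and the extraction of the source bound from only the finitely many initial constants via Proposition \ref{meminhom} are the essential ingredients.
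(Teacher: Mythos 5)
Your proof is correct and follows essentially the same route as the paper: the paper's own argument is simply to invoke Theorem \ref{majc}, so that the coefficients of each local part $m_\lambda$ obey the logarithmic bound (\ref{eqmaj}) and therefore $m_\lambda \in \tilde{R}_{\lambda,c}$ (resp.\ $R_{\infty,c}$), whence $m \in M_c$. Your version is in fact somewhat more careful on the one delicate point, since you rerun the proof of Theorem \ref{majc} for the formal solution itself --- rescaling the initial constants into $W(k)$, extracting the bound on the inhomogeneous term via Proposition \ref{meminhom} from the constant terms alone, and checking the hypotheses of Theorem 2.3.3 of \cite{ThChat} --- whereas the paper cites Theorem \ref{majc}, whose statement is phrased for elements already known to lie in $H^1_{MW,c}(V,\pi_*A^\dagger_K)$, and leaves implicit that the underlying Dwork-type estimate applies to formal solutions.
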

\begin{proof}
  From the Theorem \ref{majc}, the coefficients of the local part
  $m_\lambda$ of $m$ satisfy the logarithmic bounds of Equation
  (\ref{eqmaj}) for all $\lambda \in \Lambda$. In particular, this
  implies that the $m_\lambda$ are all in $R_{\lambda,c}$. 
\end{proof}

\begin{remark}\label{remab}
One can obtain explicit formulas for the constants $\alpha$ and $\beta$ 
of the theorem. The proof of Theorem 2.3.3 (inspired from methods of 
Alan Lauder) in~\cite{ThChat} gives the expressions 
$$\alpha = 2\alpha ' +2$$
and 
$$\beta = 2\beta ' + 2 \log_p(3)-B$$
the constants $\alpha '$ and $\beta '$ are computed in 
\cite[Note 4.11]{MR2261044}, with the 
following expressions:
$$\alpha' =2(1+\log_p(2))+3$$ and
$$\beta' = \alpha' \log_p(5) + \beta_2 + \beta_3$$
where
$$\beta_2 = 2(1+\log_p(2))+3$$
and
$$\beta_3 = 4(\frac{2}{p-1} +4\log_p(3)+2\log_p(2)).$$
\end{remark}

\begin{remark}
Let us consider the term $B$. Recall that we saw in the proof of Proposition 
\ref{meminhom} that we have (we keep the notations of the proof) 
\begin{equation}
  B \geq \min_{\lambda \in \Lambda}\left(v_p\left(
\frac{G(\lambda)}{\Delta'(\lambda)} c^
      {\lambda}\right) \right).
\end{equation}
where $c^\lambda$ is formed by the constant terms of the element of
the cohomology group we consider so that we can suppose that its
$p$-adic valuation is zero.  Now since the matrix of the connection is
$$Mat_{\nabla_{GM}}=Q(t)^{-1}\left(                                      
\begin{array}{cc}                                                             
0 & 0 \\                                                          
0 & \frac{Q'(t)}{2} 
\end{array}    
\right)$$
the only non-zero term of $\frac{G(\lambda')}{\Delta'(\lambda')}$ is $1/2$ 
and we can suppose $B=0$.
\end{remark}

\section{Description of the algorithm and complexity
  analysis}\label{secalgo2}
In this section, we present an algorithm, based on the results of
Section \ref{secalgo1}, to compute a basis of the Monsky-Washnitzer cohomology
with compact support of an hyperelliptic curve. The algorithm takes as
input:
\begin{itemize}
\item a finite field of odd characteristic $k$,
\item a genus $g$ hyperelliptic curve $\overline{C}_k$ over $k$, given
  by an equation $Y^2=\prod_{i=1}^{2g+1} (X-\overline{\lambda}_i)$,
  with $\overline{\lambda}_i \in k$ distinct,
\item two positive integers $P_1$ and $P_2$,
\end{itemize}
and returns a basis of the space $H^1_{MW,c}(V,\pi_* A^\dagger_K)$
computed with analytic precision $P_1$ and $p$-adic precision $P_2$.

\subsection{An algorithm for the computation of a basis of the cohomology
  of Monsky-Washnitzer of a curve}\label{thealgo}

Denote by $C_k$ an affine plane model of $\overline{C}_k$. We denote
by $t$ the coordinate on $\Bbb{A}^1_k$.

\subsubsection{The set up}
For $i=1, \ldots, 2g+1$, let $\lambda_i \in W(k)$ lifting
$\overline{\lambda}_i$. Let $\overline{C}_K$ be the hyperelliptic curve
over $K$ given by the equation
$$Y^2=Q(X) \quad \text{with} \quad Q(X) =\prod_{i=1}^{2g+1} (X-\lambda_i).$$

Denote by $\infty$ the point an infinity of $\overline{C}_K$.
Keeping the notations of Section \ref{secalgo1}, we let $\Lambda=\{
\lambda_1, \ldots, \lambda_{2g+1}, \infty \}$ and $\Lambda_0 =
\Lambda \setminus \{\infty \}$. Let $V$ be the subvariety of $\Proj^1_K$ whose
geometric point set is the complementary of $\Lambda$, let $U=
\pi^{-1}(V)$ where $\pi$ is the projection along the $Y$-axis. Let
$U_k$ and $V_k$ be respectively $U$ and $V$ modulo $p$.

The algorithm goes through the following $3$ steps.
\subsubsection{Step 1: computation of the connection matrix}\label{matcon}
The Gauss-Manin connection matrix on $A^\dagger_K$ is easily
described. We fix from now on the basis $\{1,Y\}$ of the
$B^{\dagger}_K$-module $A^\dagger_K$. The matrix is given by the
derivation with respect to $Y$ in $A^\dagger_K$ seen as a
$B^\dagger_K$-module.  Since in $\Omega^1_A$ we have $dY =
\frac{Q'(X)}{2Q(X)}Y.dX$, the Gauss-Manin connection matrix over
$A_K^\dagger$ associated to the projection $\pi:U \rightarrow V$ is:
$$Mat_{\nabla_{GM}}=Q(t)^{-1}\left(
\begin{array}{cc}
0 & 0 \\
0 & \frac{Q'(t)}{2}
\end{array}
\right).$$

\subsubsection{Step 2: Computation of the matrix $M_{n}$}
(see Section \ref{glob}) Here $n$ is the analytic precision of the
computation which will be fixed later, depending on whether we use
the local or the global method.

In order to obtain the matrix $M_{n}$, we have to compute for $\lambda
\in \Lambda$ the local development in $\lambda$ of $Q'(t)/Q(t)$ that
we denote by $S_{\nabla, \lambda}(t)$ and for each $\lambda, \lambda'
\in \Lambda$ the local development in $\lambda'$ of the principal part
of $S_{\nabla, \lambda}(t)$.

The development of $Q'(t)$ in $\lambda$ is nothing but the evaluation
$Q'(t+\lambda)$ which can be done using Horner's method or the
Paterson-Stockmeyer algorithm~\cite{MR0314238}. The computation of a
development of $1/Q(t)$ in $\lambda$ can be done by
\begin{itemize}
\item computing a local development $S_{Q,\lambda}(t)$ of $Q(t)$ in
  $\lambda$ using Horner's method;
\item inverting $S_{Q,\lambda}(t)$ using a Newton iteration.
\end{itemize}
The case of $\lambda=\infty$ can be treated in a similar manner.

Then we have to compute the product of the local developments in
$\lambda$ of $Q'(t)$ and $1/Q(t)$ to obtain $S_{\Delta, \lambda}(t)$.

As $S_{\nabla, \lambda}(t)$ can only have simple poles, the computation
of a local development of the principal part of $S_{\nabla,
  \lambda}(t)$ in $\lambda'$ boils down to the computation of an
inverse locally at 
 zero of a term of the form $t+\lambda-\lambda'$ which
can be done by a Newton iteration.

\subsubsection{Step 3: solving the equation $\nabla_c(m_c)=0$}
The next step is to solve the equation $\nabla_c(m_c)=0$ on $M_c$.  We
have to compute modulo $(t-\lambda)^{P_1}$ locally at $\lambda \in
\Lambda$ and modulo $p^{P_2}$ for $p$-adic precision. In section
\ref{secalgo1}, we have given two ways to obtain a basis of solutions
of the differential equation $\nabla_c(m_c)=0$. We use the local
method after determining the first terms of a basis thanks to the
global method. It should remarked that due to the special form of the
connection associated to a hyperelliptic curve, it is possible in the
case we consider to compute directly these terms. Still we present the
global method for its general interest.

\paragraph{Global method} We first use the global method to compute a
basis of solutions at small fixed analytic precision. For this, we
have to compute the matrices $M_1$ and $D_1$ and solve the linear
system
$$(M_{1}+D_{1})v=0,$$
over $K$. Then it is necessary to put aside the trivial solutions
belonging to $Triv_{1}$ and project the remaining ones onto $Rel_{1}$. To
conclude, we truncate the remaining vectors as explained in Proposition
\ref{Triv}. 

\paragraph{Local method} Denote by $m^1_c, \ldots, m^{4g+1}_c \in M_c$
the elements of a basis of the space $H^1_{MW,c}(V, \pi_* A_K^\dagger)$ computed
up to analytic precision $1$ with the global method. For
$j=1, \ldots , 4g+1$, we write, $m^j_c=(m^j_{\lambda_1}, \ldots,
m^j_{\lambda{2g+1}}, m^j_\infty)$.

For a fixed $\lambda \in \Lambda$, we explain how to lift
$m^j_\lambda$, for $j=1, \ldots , 4g+1$, using the local differential
equation provided by Proposition \ref{eqinhom}. For this, we have to
compute the constant term of Equation (\ref{leqinh}). The general expression
of this coefficient is
$$u_j=\sum_{i=0,1} Y^i \otimes \phi_\lambda 
\left( \sum_{\lambda' \in \Lambda} Pr_{\lambda '}(\phi_{\lambda
    '}(f_i).g^{\lambda '}_{j,i})\right),$$ where $f_i$ depends only on
the Gauss-Manin connection and for $\lambda \in \Lambda$,
$g_{j,i}^\lambda \in \tilde{R}_{\lambda,c}$ is such that
$$m^j_\lambda =
\sum_{i=0,1} Y^i \otimes g^\lambda_{j,i}.$$
As $f_i$ has only simple poles, we can write
$$\phi_\lambda 
\left( \sum_{\lambda' \in \Lambda} Pr_{\lambda '}(\phi_{\lambda
    '}(f_i).g^{\lambda '}_{j,i})\right)= \sum_{\lambda' \in \Lambda}
\phi_\lambda(Pr_{\lambda'}
(\phi_{\lambda'}(f_i))).g_{j,i}^{\lambda'}(0).$$ We remark that
$g_{j,i}^{\lambda'}(0)$ can be computed with the global method. As a
consequence, once we have computed for a fixed $\lambda \in \Lambda$,
$r_{\lambda,\lambda'}= \phi_\lambda (Pr_{\lambda'}
(\phi_{\lambda'}(f_i)))$, it is possible to recover $u_j$ by computing
a linear combination of the $r_{\lambda,\lambda'}$ with coefficient in
$K$.

Equation (\ref{leqinh}) can be rewritten as an equation
of the form
$$Z'=AZ+B,$$
where $A$ (resp. B) is a $(2,2)$-matrix (resp. $(2,1)$-matrix) with
coefficients in $K[[t]]$. It is possible to compute an approximation
of the unique solution $Z$ of this equation satisfying $Z(0)=v$ with
precision $P_1$ using an asymptotically fast algorithm such as given
by Theorem 2 of~\cite{BostanCOSSS07}.  The initial value $v$ comes
from the global method.

\subsection{Complexity analysis}\label{complex1}
In order to assess the complexity of our algorithm we use the
computational model of a Random Access Machine~\cite{MR1251285}.  In
this paper, we use the soft-O notation and choose to ignore logarithmic
terms in the complexity functions. For instance, using the algorithm
of Sch\"onhage-Strassen, the multiplication of two $n$-bit length
integers takes $\tilde{O}(n)$ time.  We suppose that $k$ is a finite
field of cardinality $q$ and characteristic $p$.  Let $x \in W(k)$, we
say that we have computed $x$ up to precision $P_2$ if we have
computed a representative of $x \mod p^{P_2}$.  In the following we
assume the sparse modulus representation which is explained in
\cite[pp.239]{MR2162716}.  Let $x,y \in W(k) \slash p^{P_2} W(k)$,
under this assumption one can compute the product $xy$ with precision
$P_2$ by performing $M=\tilde{O}(\log (q) P_2)$ bit operations. The
storage requirement for an element of $W(k)$ with precision $P_2$ is
$O(\log(q) P_2)$.

Let $h=\sum_{\ell\geq 0} a_\ell t^\ell \in W(k)[[t]]$ with $a_\ell \in
W(k)$.  We say that we have computed $h$ up to precision $P_1$ if we
have computed a representative of $h \mod t^{P_1}$. Using the
algorithm given in~\cite{MR2001757}, the multiplication of two
polynomials of degree $P_1$ with coefficients in $W(k)$ takes
$\tilde{O}(P_1)$ operations in $W(k)$.  As a consequence, the
multiplication of two elements of $W(k)[[t]]$ with precision $P_1$
takes $N=\tilde{O}(\log (q) P_2P_1)$ time. The storage requirement for
an element of $W(k)[[t]]$ with analytic precision $P_1$ and $p$-adic
precision $P_2$ is $O(\log(q)P_1 P_2)$.

Now, we give time and memory complexity bounds for the computation of
a basis of the cohomology with analytic precision $P_1$ and $p$-adic
precision $P_2$. We refer to Section \ref{thealgo} for the description
of each step.

\subsubsection{Step 1:} The asymptotic running time of this step is
clearly negligible with respect to the other steps.

\subsubsection{Step 2:}
Using the local method, we only have to compute $M_1$ which makes the
running time of this step also negligible with respect to the other
steps.

\subsubsection{Step 3:} In this step, we use the global method to
compute the first terms of the solutions required for the local method.

\paragraph{The global method} We have to inverse a matrix with
coefficients in $K$ the dimension of which is in the order of $g$.
The total cost is $\tilde{O}(g^3 \log (q) P_2)$ time and $O(g^2
\log(q) P_2)$ memory.

\paragraph{The local method} We keep the notations of Section
\ref{thealgo} Step 3. First, for a fixed $\lambda \in \Lambda$, we
give the running time and memory usage for the computation of a lift
of $m_j^\lambda$ for $j$ running in $\{ 1, \ldots , 4g+1 \}$.

We compute $r_{\lambda,\lambda'}= \phi_\lambda (Pr_{\lambda'}
(\phi_{\lambda'}(f_i)))$ for $\lambda' \in \Lambda$.  In order to do
this, we have to develop $f_i$ in $\lambda'$. With our hypothesis the
only non trivial $f_i$ has the form $Q'(t)/Q(t)$.

The computation of a local development of $Q'$ in $\lambda'$ can be
done with the evaluation $Q'(t+\lambda')$. Using Horner's method or
the Paterson-Stockmeyer algorithm~\cite{MR0314238}, it takes $O(g M)$
time. The computation of a local development of $1/Q(t)$ in $\lambda'$
with a Newton iteration at the expense of $O(\log(P_1) N)$ time.  Then
we have to compute the product of the local developments of $Q'$ and
$1/Q$.  This product takes $O(N)$ time. Taking the principal part is
trivial. Then use again a Newton iteration to compute a development in
$\lambda$ of a principal part of the form $1/(t-\lambda')$. We are done
for the computation of $r_{\lambda,\lambda'}$.

We have to repeat this operation $O(g)$ times to obtain all the
coefficients $r_{\lambda, \lambda'}$ at the expense of $\tilde{O}(g
\log(q) P_1 P_2)$ time and $\tilde{O}(g \log(q) P_1 P_2)$ memory.

Then in order to lift $m_\lambda^j$, for $j=1, \ldots, 4g+1$, we have
to solve an equation of the form $Z'=AZ+B$. This is a $2$ dimensional
linear differential equation of order $1$ and applying Theorem 2 of
\cite{BostanCOSSS07}, a solution of this equation with analytic
precision $P_1$ can be computed in $O(N)$ time.

Now, we have to repeat all the preceding operations for $\lambda$
running in $\Lambda$. In all the computational time is $\tilde{O}(g^2 \log(q) P_1
P_2)$ and the memory consumption is $O(g \log(q) P_1 P_2)$

\begin{proposition}
  Let $P_1$ and $P_2$ be positive integers. The global time for the
  computation of a basis of $H^1_{MW,c}(V, \pi_* A_K^\dagger)$ with
  analytic precision $P_1$ and $p$-adic precision $P_2$ is bounded by
  $\tilde{O}(g^2  \log (q) P_1P_2)$. The memory consumption is $O(g
  \log (q) P_1 P_2)$.
\end{proposition}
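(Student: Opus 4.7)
The plan is to assemble the step-by-step bounds obtained in Subsection~\ref{complex1}. Since the algorithm decomposes into three sequential phases, the global running time and memory consumption are just the sums of the per-phase costs, and the proposition reduces to verifying that the dominant term is the one coming from the local lifting phase of Step~3. Steps~1 and~2 only require a handful of local expansions and the construction of $M_1$ at analytic precision $1$, so both are asymptotically negligible with respect to the local lift.

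The heart of the proof is the bookkeeping for Step~3. The global method is invoked only to initialise the local one, hence runs at constant analytic precision and reduces to solving an $O(g)\times O(g)$ linear system over $W(k)/p^{P_2}$; standard Gaussian elimination gives $\tilde{O}(g^3 \log(q) P_2)$ time and $O(g^2 \log(q) P_2)$ memory, which is absorbed into the local contribution under the mild assumption (needed anyway for Theorem~\ref{majc} to yield a usable bound on the truncation error) that $P_1\gtrsim g$. For the local phase, as already laid out in Subsection~\ref{complex1}, for each $\lambda\in\Lambda$ the precomputation of all the $r_{\lambda,\lambda'}$ via Horner, Newton inversion of $S_{Q,\lambda}$, a series product, and a final Newton step to invert a shift $(t-\lambda'+\lambda)$ costs $\tilde{O}(g\log(q) P_1 P_2)$. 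Then lifting each of the $4g+1$ basis vectors through the inhomogeneous first-order linear system $Z'=AZ+B$ using Theorem~2 of~\cite{BostanCOSSS07} costs $O(N)=\tilde{O}(\log(q) P_1 P_2)$ per vector, totalling $\tilde{O}(g\log(q) P_1 P_2)$ per $\lambda$. Multiplying by $|\Lambda|=2g+2$ yields the claimed time bound $\tilde{O}(g^2 \log(q) P_1 P_2)$.

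For the memory estimate, observe that at any instant one only needs to keep alive $O(g)$ power series in $W(k)[[t-\lambda]]$ at analytic precision $P_1$ and $p$-adic precision $P_2$; each such object occupies $O(\log(q) P_1 P_2)$ bits, so the overall storage is $O(g\log(q) P_1 P_2)$, as stated. The only genuinely delicate point in the argument is the tacit dominance of the local phase over the global initialisation: one must check that the parameter regime $P_1\gtrsim g$, which is in any case forced by the analytic precision requirements of Section~\ref{secalgo1}, is enough to make $g^3 P_2$ lie within $\tilde{O}(g^2 P_1 P_2)$.
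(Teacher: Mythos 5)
Your proposal is correct and follows essentially the same route as the paper, whose proof is the step-by-step accounting of Subsection~\ref{complex1}: Steps~1--2 negligible, the local phase of Step~3 costed via the $r_{\lambda,\lambda'}$ precomputations and the $4g+1$ applications of Theorem~2 of~\cite{BostanCOSSS07}, repeated over the $2g+2$ points of $\Lambda$, with the same $O(g\log(q)P_1P_2)$ memory count. Your explicit observation that the global initialisation's $\tilde{O}(g^3\log(q)P_2)$ time and $O(g^2\log(q)P_2)$ memory are only absorbed into the stated bounds when $P_1\gtrsim g$ is a point the paper leaves tacit, and it indeed holds in the regime where the algorithm is run ($P_1=O(P_2)$ with $P_2$ of order $gn$).
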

 
\subsection{An example}
In this section, we give a detailed example of computation. We
consider the case of the elliptic curve with equation $Y^2=X^3-X$ over
$\Bbb{F}_5$. We lift the equation to $Y^2-X^3+X \in \Bbb{Z}_5[X,Y]$ so
that we are interested in the finite module $M$ with basis $\{1,Y\}$
over $\Bbb{Q}_5[t,t^{-1},(t-1)^{-1},(t+1)^{-1}]^\dagger$. We denote as
before $\Lambda = \{\infty, 0,1,-1\}$ and consider the connection on
$M$ given by the matrix
$$ Mat_{\nabla_{GM}} = (\Delta(t)) ^{-1} \left( 
\begin{array}{ccc} 0 & 0 \\
             0 & \frac{3t^2-1}{2} \end{array} \right),$$
where $\Delta(t)=t^3-t$.
Let $h(t)=1/(2\Delta) (3t^2-1)$ be the only non-zero term of this matrix. 
Its local development in Laurent series at the elements of $\Lambda$
are:

\begin{itemize}
\item at $\infty$: $\frac{3}{2}t +t^3 + O(t^4)$,
\item at $0$: $\frac{1}{2} t^{-1} - t - t^3 + O(t^4)$,
\item at $1$: $\frac{1}{2} t^{-1} + \frac{11}{8} - \frac{5}{8} t + \frac{9}{16} t^2 - \frac{17}{32} t^3 + O(t^4)$,
\item at $-1$: $\frac{1}{2} t^{-1} - \frac{11}{8} - \frac{5}{8} t - \frac{9}{16} t^2 - \frac{17}{32} t^3 + O(t^4)$.
\end{itemize}
So that we have:
$$ M^+_{h,\infty,1} = \left( 
\begin{array}{ccc} 
  0 & 0 & 0\\
  0 & 0 & 0 
 \end{array} \right),$$
 $$ M^+_{h,0,1} = \left( 
\begin{array}{ccc} 
  0 & \frac{1}{2} & 0\\
  -1 & 0 & \frac{1}{2} 
 \end{array} \right),$$
$$ M^+_{h,1,1} = \left( 
\begin{array}{ccc} 
  \frac{11}{8} & \frac{1}{2} & 0\\
  -\frac{5}{8} &  \frac{11}{8} & \frac{1}{2} 
 \end{array} \right),$$
$$ M^+_{h,-1,1} = \left( 
 \begin{array}{ccc} 
  -\frac{11}{8} & \frac{1}{2} & 0\\
  -\frac{5}{8} &  -\frac{11}{8} & \frac{1}{2} 
 \end{array} \right).$$

Since $h$ has no pole at infinity, the matrices
$M^{-,\lambda}_{h_i,\infty,1}$ are zero for $\lambda = 0,1,-1$.  We
expand $\frac{1}{t}$ locally
\begin{itemize}
\item at $\infty$: $t$,
\item at $1$: $1 - t + t^2 - t^3 + O(t^4)$,
\item at $-1$: $-1 - t - t^2 - t^3 + O(t^4)$,
\end{itemize}
which gives:
$$ M^{-,\infty}_{h,0,1} = -\frac{1}{2}\left( 
\begin{array}{ccc} 
  0 & 0 & 0\\
  1 & 0 & 0 
\end{array} \right),$$
 $$ M^{-,1}_{h,0,1} = -\frac{1}{2}\left( 
\begin{array}{ccc} 
  1 & 0 & 0\\
  -1 & 0 & 0 
\end{array} \right),$$
 $$ M^{-,-1}_{h,0,1} = -\frac{1}{2}\left( 
\begin{array}{ccc} 
  -1 & 0 & 0\\
  -1 & 0 & 0 
\end{array} \right).$$

We find similarly
$$ M^{-,\infty}_{h,1,1} = -\frac{1}{2}\left( 
\begin{array}{ccc} 
  0 & 0 & 0\\
  1 & 0 & 0 
\end{array} \right),$$
 $$ M^{-,0}_{h,1,1} = -\frac{1}{2}\left( 
\begin{array}{ccc} 
  -1 & 0 & 0\\
  -1 & 0 & 0 
\end{array} \right),$$
 $$ M^{-,-1}_{h,1,1} = -\frac{1}{2}\left( 
\begin{array}{ccc} 
  -\frac{1}{2} & 0 & 0\\
  -\frac{1}{4} & 0 & 0 
\end{array} \right),$$
and
$$ M^{-,\infty}_{h,-1,1} = -\frac{1}{2}\left( 
\begin{array}{ccc} 
  0 & 0 & 0\\
  1 & 0 & 0 
\end{array} \right),$$
 $$ M^{-,0}_{h,-1,1} = -\frac{1}{2}\left( 
\begin{array}{ccc} 
  1 & 0 & 0\\
  -1 & 0 & 0 
\end{array} \right),$$
 $$ M^{-,1}_{h,0,-1} = -\frac{1}{2}\left( 
\begin{array}{ccc} 
  \frac{1}{2} & 0 & 0\\
  -\frac{1}{4} & 0 & 0 
\end{array} \right).$$
Now we obtain the final matrix
$$\frac{1}{2}.\left( \begin{array}{cccccccccccccccccccccccccccc} 
  0 & 0 & 0 & 0 & 0 & 0 & 0 & 0 & 0 & 0 & 0 & 0 & 0 & 0 & 0 & 0 & 0 & 0 & 0 & 0 & 0 & 0 & 0 & 0\\
  0 & 0 & 0 & 0 & 0 & 0 & 0 & 0 & 0 & 0 & 0 & 0 & 0 & 0 & 0 & 0 & 0 & 0 & 0 & 0 & 0 & 0 & 0 & 0\\
  0 & 0 & 0 & 0 & 0 & 0 & 0 & 0 & 0 & 0 & 0 & 0 & 0 & 0 & 0 & 0 & 0 & 0 & 0 & 0 & 0 & 0 & 0 & 0\\
  0 & 0 & 0 & 0 & 0 & 0 & 0 & 0 & 0 & -1 & 0 & 0 & 0 & 0 & 0 & -1 & 0 & 0 & 0 & 0 & 0 & -1 & 0 & 0\\
  0 & 0 & 0 & 0 & 0 & 0 & 0 & 2 & 0 & 0 & 0 & 0 & 0 & 0 & 0 & 0 & 0 & 0 & 0 & 0 & 0 & 0 & 0 & 0\\
  0 & 0 & 0 & 0 & 0 & 0 & 0 & 0 & 4 & 0 & 0 & 0 & 0 & 0 & 0 & 0 & 0 & 0 & 0 & 0 & 0 & 0 & 0 & 0\\
  0 & 0 & 0 & 0 & 0 & 0 & 0 & 0 & 0 & 0 & 3 & 0 & 0 & 0 & 0 & 1 & 0 & 0 & 0 & 0 & 0 & -1 & 0 & 0\\
  0 & 0 & 0 & 0 & 0 & 0 & 0 & 0 & 0 & -2 & 0 & 5 & 0 & 0 & 0 & 1 & 0 & 0 & 0 & 0 & 0 & 1 & 0 & 0\\
  0 & 0 & 0 & 0 & 0 & 0 & 0 & 0 & 0 & 0 & 0 & 0 & 0 & 2 & 0 & 0 & 0 & 0 & 0 & 0 & 0 & 0 & 0 & 0\\
  0 & 0 & 0 & 0 & 0 & 0 & 0 & 0 & 0 & 0 & 0 & 0 & 0 & 0 & 4 & 0 & 0 & 0 & 0 & 0 & 0 & 0 & 0 & 0\\
  0 & 0 & 0 & 0 & 0 & 0 & 0 & 0 & 0 & -1 & 0 & 0 & 0 & 0 & 0 & \frac{11}{4} & 3 & 0 & 0 & 0 & 0 & -\frac{1}{2} & 0 & 0\\
  0 & 0 & 0 & 0 & 0 & 0 & 0 & 0 & 0 & 1 & 0 & 0 & 0 & 0 & 0 & -\frac{5}{4} & \frac{11}{4} & 5 & 0 & 0 & 0 & \frac{1}{4} & 0 & 0\\
  0 & 0 & 0 & 0 & 0 & 0 & 0 & 0 & 0 & 0 & 0 & 0 & 0 & 0 & 0 & 0 & 0 & 0 & 0 & 2 & 0 & 0 & 0 & 0\\
  0 & 0 & 0 & 0 & 0 & 0 & 0 & 0 & 0 & 0 & 0 & 0 & 0 & 0 & 0 & 0 & 0 & 0 & 0 & 0 & 4 & 0 & 0 & 0\\
  0 & 0 & 0 & 0 & 0 & 0 & 0 & 0 & 0 & 1 & 0 & 0 & 0 & 0 & 0 & \frac{1}{2} & 0 & 0 & 0 & 0 & 0 & -\frac{11}{4} & 3 & 0\\
  0 & 0 & 0 & 0 & 0 & 0 & 0 & 0 & 0 & 1 & 0 & 0 & 0 & 0 & 0 & \frac{1}{4} & 0 & 0 & 0 & 0 & 0 & -\frac{5}{4} & -\frac{11}{4} & 5\\
\end{array}\right).$$

Its kernel is spanned by the vectors:\\

\begin{itemize}
\item $v_1=(1,0,0,0,0,0,0,0,0,0,0,0,0,0,0,0,0,0,0,0,0,0,0,0)^\vee,$
\item $v_2=(0,1,0,0,0,0,0,0,0,0,0,0,0,0,0,0,0,0,0,0,0,0,0,0)^\vee,$
\item $v_3=(0,0,1,0,0,0,0,0,0,0,0,0,0,0,0,0,0,0,0,0,0,0,0,0)^\vee,$
\item $v_4=(0,0,0,1,0,0,0,0,0,0,0,0,0,0,0,0,0,0,0,0,0,0,0,0)^\vee,$
\item $v_5=(0,0,0,0,1,0,0,0,0,0,0,0,0,0,0,0,0,0,0,0,0,0,0,0)^\vee,$
\item $v_6=(0,0,0,0,0,1,0,0,0,0,0,0,0,0,0,0,0,0,0,0,0,0,0,0)^\vee,$
\item $v_7=(0,0,0,0,0,0,1,0,0,0,0,0,0,0,0,0,0,0,0,0,0,0,0,0)^\vee,$
\item $v_8=(0,0,0,0,0,0,0,0,0,0,0,0,1,0,0,0,0,0,0,0,0,0,0,0)^\vee,$
\item $v_9=(0,0,0,0,0,0,0,0,0,0,0,0,0,0,0,0,0,0,1,0,0,0,0,0)^\vee,$
\item $v_{10}=(0,0,0,0,0,0,0,0,0,1,-\frac{1}{3},\frac{3}{5},0,0,0,0,\frac{1}{6},-\frac{29}{24},0,0,0,-1,-\frac{5}{4},-\frac{91}{80})^\vee,$
\item $v_{11}=(0,0,0,0,0,0,0,0,0,0,-\frac{2}{3},0,0,0,0,1,-\frac{13}{12},\frac{43}{48},0,0,0,-1,-\frac{13}{12},-\frac{43}{48})^\vee.$
\end{itemize}

The first six vectors are trivial or non-relevant solutions since
either their truncature is zero either setting their constant term at
the infinity to zero makes them null, so that we find that
$H^1_{MW,c}(V,\pi_*A_K^\dagger)$ has dimension $5$. Since our curve is
of genus $1$ and since we took off three points out of it, this agrees
with the theoretical dimension.

\section{The action of Frobenius on the basis}\label{seclift}
We keep in this section the notations already introduced, and suppose
furthermore that the roots $\lambda$ of $Q$ are Teichm\"uller
elements~\cite{MR2162716}.  We explain how to compute a lifting of the
Frobenius morphism to $M_c$ and obtain its action on a basis of
$H^1_{MV,c}(U, \pi_* A_K^\dagger)$. Of course all the computations are
made with finite analytic and $p$-adic precisions and the key point
here is the determination of sufficient precisions in order to
guarantee the correctness of the final result.

\subsection{Lifting the Frobenius morphism}
Following Kedlaya~\cite{Kedlaya2001}, we define
a lift $F$ of the $p$-th Frobenius on $A^\dagger_K$ by setting
$F(X)=X^{p}$ and
$$F(Y)= Y^{p} \left( 1+ \frac{Q^{\sigma}(X^{p}) - Q(X)^{p}}
  {Q(X)^{p}}\right)^{1/2}$$ where $\sigma$ is the canonical Witt
vectors Frobenius.  The expansion of the square root can be computed
using a Newton iteration (see \cite{Kedlaya2001}). We then have the
\begin{proposition}\label{proplift}
  For all positive integer $n$ the element $F(Y) \mod p^n$ can be
  written as $Y$ times a rational fraction in $X$ such that its
  numerator and denominator are relatively primes, its denominator is
  $Q(X)^d$ with $d \leq p n - \frac{p-1}{2}$ and its pole order at the
  infinity $(2g+1)[p/2]$.
\end{proposition}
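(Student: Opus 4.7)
My plan is to exploit the exact identity $F(Y)^2 = Q^\sigma(X^p)$, which follows from Kedlaya's formula by squaring: $Y^{2p}(1+E) = Q^p + (Q^\sigma(X^p)-Q^p) = Q^\sigma(X^p)$. Since the $\lambda_i$ are Teichm\"uller, $\sigma(\lambda_i)=\lambda_i^p$, so $Q^\sigma(X^p)$ vanishes at each $X=\lambda_i$; explicit factoring yields $Q^\sigma(X^p) = Q(X)\tilde{Q}(X)$ with $\tilde{Q}(X) = \prod_{i=1}^{2g+1}P_i(X)$, where $P_i(X)=(X^p-\lambda_i^p)/(X-\lambda_i)=\sum_{j=0}^{p-1}\lambda_i^j X^{p-1-j}$. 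Setting $R(X)=F(Y)/Y$, we obtain the clean equality $R^2 = \tilde{Q}$, valid to all $p$-adic orders.

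Next I would show $\tilde{Q} = Q^{p-1} + pH$ for a polynomial $H\in W(k)[X]$ with $\deg H \leq (p-1)(2g+1)-1$. Indeed, modulo $p$ the identity $(X-\lambda_i)^p \equiv X^p-\lambda_i^p$ gives $P_i \equiv (X-\lambda_i)^{p-1}\pmod p$, so $\tilde{Q}\equiv Q^{p-1}\pmod p$; and since both $\tilde{Q}$ and $Q^{p-1}$ are monic of the same degree $(p-1)(2g+1)$, their difference has strictly smaller degree.

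The heart of the argument is then the binomial expansion
\begin{equation*}
R = Q^{(p-1)/2}\Bigl(1+\frac{pH}{Q^{p-1}}\Bigr)^{1/2} = \sum_{k\geq 0}\binom{1/2}{k}\frac{p^k H^k}{Q^{(2k-1)(p-1)/2}},
\end{equation*}
with the $k=0$ summand being $Q^{(p-1)/2}$. The key integrality input is $v_p\bigl(\binom{1/2}{k}\bigr)\geq 0$ for odd $p$, which I would derive from the classical identity $\binom{1/2}{k}=(-1)^{k-1}C_{k-1}/2^{2k-1}$, where $C_{k-1}\in\mathbb{Z}$ is the $(k-1)$-th Catalan number. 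Consequently $v_p(\binom{1/2}{k}p^k)\geq k$, so modulo $p^n$ only the terms $k=0,\dots,n-1$ survive. Putting them over the common denominator $Q^{(2n-3)(p-1)/2}$ (the denominator of the $k=n-1$ summand), we obtain $F(Y)\equiv Y\cdot N(X)/Q(X)^{(2n-3)(p-1)/2}\pmod{p^n}$ with $N\in W(k)[X]$, and the elementary inequality $(2n-3)(p-1)/2\leq pn-(p-1)/2$ (equivalent to $(n-1)(p-1)\leq pn$) delivers the required bound on $d$ after clearing any factor of $Q$ common to $N$ and the denominator.

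For the pole order at infinity, a direct degree count using $\deg H\leq (p-1)(2g+1)-1$ shows the $k$-th summand has pole at infinity of order at most $(p-1)(2g+1)/2 - k$; only the $k=0$ term $Q^{(p-1)/2}$ attains the bound, giving pole order exactly $(2g+1)(p-1)/2 = (2g+1)[p/2]$. The main subtlety is the Catalan-number integrality of $\binom{1/2}{k}$ for odd $p$: without it the binomial truncation requires many more terms and the stated denominator bound is lost. With that in hand, the rest of the proof is bookkeeping of degrees and a trivial arithmetic check.
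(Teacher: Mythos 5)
Your proof is correct, and it reaches the conclusion by a different decomposition than the paper, though the engine is the same: a binomial expansion of the square root whose coefficients $\binom{1/2}{k}$ are $p$-adic integers for odd $p$ (the paper simply asserts this; your Catalan-number identity is a clean way to see it), followed by degree and valuation bookkeeping. The paper expands $F(Y)=Y\,Q^{(p-1)/2}\sum_k\binom{1/2}{k}p^kE^k/Q^{pk}$ with $E=\frac{1}{p}\bigl(Q^\sigma(X^p)-Q(X)^p\bigr)$, which is $p$-integral with no hypothesis on the $\lambda_i$, and reads off $v_p(a_{i,j})\geq j/p+\frac{p-1}{2p}$, whence $d\leq pn-\frac{p-1}{2}$. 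You instead use the Teichm\"uller assumption $\sigma(\lambda_i)=\lambda_i^p$ (which is indeed in force in this section of the paper, so the step is legitimate, but the paper's proof of this proposition does not need it) to factor $Q^\sigma(X^p)=Q\tilde Q$, reduce to $R^2=\tilde Q=Q^{p-1}+pH$, and expand with denominators $Q^{(2k-1)(p-1)/2}$; note also that your two expansions agree term by term since $pH/Q^{p-1}=pE/Q^p$ as elements of the ring, so no branch issue arises. What your route buys: a sharper denominator exponent $(2n-3)(p-1)/2$ (the stated bound then follows from $(n-1)(p-1)\leq pn$), and an explicit degree count giving the pole order $(2g+1)\frac{p-1}{2}$ at infinity, a point the paper dismisses as obvious from the Newton iteration. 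What it costs: dependence on the Teichm\"uller normalization, hence slightly less generality than the paper's argument.
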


\begin{proof}
  The only non obvious fact from the expression of the Newton
  iteration is the bound for $d$. Since we have
$$F(Y)= Y^{p} \left( 1+ \frac{Q^{\sigma}(X^{p}) - Q(X)^{p}} {Q(X)^{p}}\right)^{1/2},$$
we can write
$$F(Y)=Y .Q(X)^{\frac{p-1}{2}} \sum_{k \geq 0} \left(
\begin{array}{c}
1/2\\
k
\end{array}
\right) \frac{p^k E(X)^k}{Q(X)^{pk}},$$ where
$E(X)=1/p.(Q^\sigma(X^p)-Q(X)^p)$ and $E$ having integral
coefficients. In particular, since the binomial coefficients are
$p$-adic integers we have
$$ F(Y) = \sum a_{i,j} \frac{X^i}{Q(X)^j}$$
with
$$v_p(a_{i,j}) \geq j/p+\frac{p-1}{2p}$$
and we are done.
\end{proof}

We obtain an approximation $F_n$ of a $\sigma$-linear endomorphism of
$A^\dagger_K$ lifting the Frobenius endomorphism on $A_k$ up to
$p$-adic precision $n$ by setting $F_n(X)=X^{p}$ and $F_n(Y)$ equal to
the truncated development of the rational fraction obtained with
Proposition \ref{proplift}.

\subsection{Twisted local equation}
In the relative situation that we consider, the Frobenius lifting
decomposes as a Frobenius lift on $B^\dagger_K$, which sends $t$ to
$t^{p}$ that we denote $F_B$ (the 'local' Frobenius), and a
Frobenius on the $B^\dagger_K$-module $A^\dagger_K$ (making it an
$F$-isocrystal). We first consider the computation of the action
of $F_{B}$. 

A direct way to compute the action of $F_B$ on a element $m_c \in M_c$
is to make the evaluation $t \mapsto t^{p}$ in all the local
developments at $\lambda \in \Lambda$, apply $\sigma$ on the
coefficients and develop the result to recover a series in
$(t-\lambda)$.

The following remark leads to a more efficient method.  Let $m_c \in
M_c$ representing an element of a basis of $H^1_{MW,c}(U, \pi_*
A_K^\dagger)$.  We recall that, by Proposition \ref{eqinhom}, a local
component $m_\lambda$ of $m_c$ in $\lambda \in \Lambda$ satisfies a
non-homogeneous differential equation
\begin{equation}
 \frac{\partial}{\partial t}m_\lambda-M_{\nabla,\lambda}m_\lambda=u.
\end{equation}

From this equation, we deduce the
\begin{proposition}\label{localfrob}
  For $\lambda \in \Lambda$, the image of $m_\lambda \in
  \tilde{R}_c^\lambda$ by the local Frobenius $F_B(m_\lambda)$
  satisfies a local differential equation
\begin{equation}
\begin{split}\label{eqlocalfrob}
  & (t^{p}-\sigma(\lambda)) \frac{\partial}{\partial t} F_B(m_\lambda)-p
  t^{p-1} F_B((t-\lambda)M_{\nabla, \lambda}) F_B(m_\lambda) \\ & =p
  t^{p-1}(t^{p} - \sigma(\lambda)) F_B(u).
\end{split}
\end{equation} 
\end{proposition}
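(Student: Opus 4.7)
The plan is to apply $F_B$ to the local inhomogeneous equation of Proposition \ref{eqinhom}, then use the chain rule together with the fact that $F_B$ is a ring homomorphism in order to rewrite everything in terms of $F_B(m_\lambda)$, and finally clear the natural denominators.

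More concretely, I would first recall that by Proposition \ref{eqinhom} the local component $m_\lambda$ satisfies
\begin{equation*}
\frac{\partial}{\partial t} m_\lambda - M_{\nabla,\lambda} m_\lambda = u
\end{equation*}
(up to the sign convention already fixed in that proposition). Applying the ring homomorphism $F_B$ yields
\begin{equation*}
F_B\!\left(\tfrac{\partial}{\partial t} m_\lambda\right) - F_B(M_{\nabla,\lambda})\, F_B(m_\lambda) = F_B(u).
\end{equation*}

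The next and only non‑purely formal step is the chain rule: since $F_B$ sends $t$ to $t^{p}$ and acts on coefficients through $\sigma$, for any local series $g$ one has $F_B(g)(t)=g^{\sigma}(t^{p})$, so
\begin{equation*}
\frac{\partial}{\partial t} F_B(g) = p\, t^{p-1}\, F_B\!\left(\tfrac{\partial}{\partial t} g\right).
\end{equation*}
Substituting this into the previous equation gives
\begin{equation*}
\frac{1}{p\, t^{p-1}}\, \frac{\partial}{\partial t} F_B(m_\lambda) - F_B(M_{\nabla,\lambda})\, F_B(m_\lambda) = F_B(u).
\end{equation*}

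To finish, I would multiply both sides by $p\, t^{p-1}(t^{p}-\sigma(\lambda))$ to obtain an equation with regular coefficients at $\lambda$ after Frobenius. Since $F_B$ is a ring homomorphism and $F_B(t-\lambda)=t^{p}-\sigma(\lambda)$, we have $(t^{p}-\sigma(\lambda))\,F_B(M_{\nabla,\lambda})=F_B\!\bigl((t-\lambda)M_{\nabla,\lambda}\bigr)$, which yields precisely the asserted equation~(\ref{eqlocalfrob}). The only delicate point to underline is that $(t-\lambda)M_{\nabla,\lambda}$ is regular at $\lambda$ (because $\lambda$ is at most a simple pole of the Gauss–Manin matrix, as used already in the proof of Proposition~\ref{meminhom}); this is what makes the factor $F_B((t-\lambda)M_{\nabla,\lambda})$ well defined and ensures that the resulting differential equation has analytic coefficients, so that it can genuinely be used to lift $F_B(m_\lambda)$ locally in the algorithmic step that follows.
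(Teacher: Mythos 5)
Your derivation is correct and is exactly the deduction the paper intends: the paper gives no explicit proof of Proposition~\ref{localfrob}, stating only that it is deduced from the local inhomogeneous equation of Proposition~\ref{eqinhom}, and your argument (apply $F_B$, use $\frac{\partial}{\partial t}F_B(g)=p\,t^{p-1}F_B(\frac{\partial}{\partial t}g)$, multiply by $p\,t^{p-1}(t^{p}-\sigma(\lambda))$ and use $F_B(t-\lambda)=t^{p}-\sigma(\lambda)$) is precisely that computation. Your remark that $(t-\lambda)M_{\nabla,\lambda}$ is regular at $\lambda$ because the connection matrix has only simple poles is a pertinent addition, consistent with the paper's use of the Teichm\"uller assumption on $\lambda$.
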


Proposition \ref{localfrob} yields a very efficient algorithm to
compute the action of $F_B(m_\lambda)$ given its first terms. Note
that here the assumption that $\lambda$ is a Teichm\"uller lifting is
crucial.

\subsection{Formulas for the theoretical precision}
In this paragraph, we explain how to apply the isocrystal Frobenius.
Here arises a technical difficulty. Since this computation consists in
replacing $Y$ by $F(Y)$, where $F(Y)$ is $Y$ times an element of
$B^\dagger_K$, this operation boils down to the computation of the
action of an overconvergent function in $B_K^\dagger$ (with infinite
negative powers) on an element of $M_c$ (with infinite positive
powers). In order to perform this to a certain precision, we take
advantage of the sharp control we have on the size of the
coefficients of both terms.  The following theorem gives expressions
for sufficient $p$-adic and analytic precisions.

\begin{theorem}\label{th2}
  Let $m_c \in M_c$ be an element of $H^1_{MW,c}(V,\pi_ù A_K^\dagger)$.
  Write $m_c=(m_{\lambda_1},\ldots,m_{\lambda_{2g+1}},{\infty})$ with
$$m_{\lambda_i}=\sum_{j=0,1} Y^j \sum_{\ell=0}^{\infty}
b^{\lambda_i}_{j,\ell} (t - \lambda_i)^\ell,$$
where $b^{\lambda_i}_{j,\ell}\in K$ and $b^{\infty}_{j,0}=0$. Let $\alpha$
and $\beta$ be integers such that
$$v_p(b^\lambda_{j,\ell}) > -(\alpha \log_p \ell+\beta)$$
for all $j$,$\ell$ and $\lambda \in \Lambda$.
Then if we set 
$$n=\max (2\alpha \log_p\left(\frac{\alpha}{2\ln(p)}\right), 2( \alpha+\beta + P_2))$$
and
$$P_1=p n - \frac{p-1}{2}$$
the image by $F_n$ of 
$m_c$ truncated at the degree $P_1$ is equal to the image of $m_c$ by
$F$ modulo $p^{P_2}$
\end{theorem}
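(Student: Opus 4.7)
Write $\tilde m_c$ for $m_c$ truncated at analytic degree $P_1$, and split
\begin{equation*}
F(m_c) - F_n(\tilde m_c) = (F-F_n)(m_c) + F_n(m_c - \tilde m_c).
\end{equation*}
Since $F$ and $F_n$ agree on $X$ (hence on $B^\dagger_K$), the two terms measure respectively the $p$-adic error in approximating the $Y$-component and the analytic truncation error in $m_c$. I plan to bound each locally at every $\lambda \in \Lambda$, using three ingredients: (i) Proposition \ref{proplift}, which tells both that $F(Y)-F_n(Y)$ has $v_p \geq n$ in its rational expansion and, via the bound $v_p(a_{i,j}) \geq j/p + (p-1)/(2p)$ on the coefficients of $F_n(Y)$, that the local expansion of $F_n(Y)/Y$ at $\lambda$ has its $(t-\lambda)^{-k}$ coefficient of $p$-adic valuation $\geq k/p$ (the overconvergence of the Newton iterate); (ii) the hypothesis $v_p(b^\lambda_{j,\ell}) > -(\alpha \log_p \ell + \beta)$; and (iii) the fact that $\lambda$ is Teichm\"uller, so $F_B((t-\lambda)^\ell) = (t^p-\lambda^p)^\ell$ has integral coefficients and minimum degree $\geq \ell$ in $t-\lambda$, ensuring that $F_B$ preserves the support condition ``degree $> P_1$''.

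\textbf{Bounding the two errors.} For the $p$-adic error, convolving the $p^n$-divisible coefficients of $F(Y)-F_n(Y)$ against those of $m_\lambda$ gives an output coefficient of valuation at least $n - \alpha \log_p \ell - \beta$, where $\ell$ is the largest contributing index of $m_\lambda$ and is bounded (through the pole depth of $F(Y)/Y$) essentially by $P_1 = pn - (p-1)/2$; hence the valuation exceeds $P_2$ provided $n - \alpha \log_p P_1 - \beta \geq P_2$. For the analytic tail, the residual $m_c - \tilde m_c$ is supported at local degrees $\ell > P_1$, and this support is preserved by $F_B$ by (iii); a contribution to output degree $m$ therefore requires pairing with a coefficient of $F_n(Y)/Y$ of pole depth $k > P_1 - m$, which by (i) has valuation $\geq (P_1-m)/p$. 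Summing with the hypothesis on $m_c$, the output valuation is at least $(P_1-m)/p - \alpha \log_p(m+k) - \beta$, and its worst case (over $m$ small and $k$ just above $P_1 - m$) is of order $n - \alpha \log_p n - \alpha - \beta$, yielding the same sufficient condition.

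\textbf{Main obstacle.} Both pieces reduce to the scalar inequality
\begin{equation*}
n - \alpha \log_p n \geq P_2 + \alpha + \beta
\end{equation*}
(using $\log_p P_1 \leq 1 + \log_p n$). From $n \geq 2(P_2+\alpha+\beta)$ it then suffices to prove $n \geq 2\alpha \log_p n$. The function $f(n) = n - 2\alpha \log_p n$ has unique interior minimum at $n = 2\alpha/\ln p$ and is ultimately nonnegative; a straightforward calculus estimate shows that the second hypothesis $n \geq 2\alpha \log_p(\alpha/(2\ln p))$ pushes $n$ into the region where $f \geq 0$. The delicate point is precisely this interdependence: raising $n$ to improve the $p$-adic precision also raises $P_1$, widening the range over which the logarithmic bound must be invoked, and it is only the rational-fraction structure of $F_n$ from Proposition \ref{proplift} --- giving simultaneously the $p^n$-approximation and the $k/p$-decay of its pole coefficients --- that makes the $p$-adic and analytic estimates match under a single condition on $n$.
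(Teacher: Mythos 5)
Your proposal follows essentially the same route as the paper's proof: it rests on Proposition \ref{proplift} (the $p^n$-divisibility of $F(Y)-F_n(Y)$, the degree bound $pn-\frac{p-1}{2}$, and the valuation growth of the pole coefficients), on the Teichm\"uller hypothesis to see that $F_B$ preserves the logarithmic coefficient bound and the analytic support (this is the paper's auxiliary lemma), and it reduces everything to the scalar inequality $n-\alpha\log_p n\geq \alpha+\beta+P_2$, which you resolve by the same choice of $n$ that the paper justifies via the Lambert $W$ function. Your explicit decomposition of the error into $(F-F_n)(m_c)$ and $F_n(m_c-\tilde m_c)$, with the separate estimate of the analytic tail via the $k/p$-decay of the pole coefficients, is if anything a slightly more detailed write-up of what the paper compresses into a single displayed condition, so the two arguments coincide in substance.
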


\begin{proof}
 If we write
$$F(Y)=\sum_{j=0,1} f_j(X) Y^j,$$
the image by $F$ of $m_c$ is given by the products
$$f_j(t).\sigma(b^\lambda_{j,\ell})(F_B(t-\lambda))^i,$$ that is to say
$$f_j(t).\sigma(b^\lambda_{j,\ell})(t^{p}-\sigma(\lambda))^i$$
for all $\ell$ and $j$.
Now for $n$ positive, we have $F_n(Y)= \sum_{j=0,1}
\tilde{f}_{n,j}(X) Y^j$ with $p^n$ dividing
$f_j(t)-\tilde{f}_{n,j}(t)$, and $\tilde{f}_{n,j}(t)$ has its degree
bounded by $p n - \frac{p-1}{2}$\\ 

Here we have to use the following easy lemma 
\begin{lemma}
  Let $\lambda \in W(t)$ be a Teichm\"uller element. Let
  $$S=\sum_{\ell=0}^{\infty} b_{\ell} (t - \lambda)^\ell$$ with
  $b_{\ell}\in K$ be such that there exist integers $\alpha$ and $\beta$
  with
$$v_p (b_{\ell}) \geq -(\alpha \log_p \ell+\beta)$$
for all $\ell \in \N$. If we write 
$$F(S)=\sum_{\ell=0}^{\infty} \sigma(b_{\ell}) (t^{p} -
\sigma(\lambda))^\ell=\sum_{\ell=0}^{\infty} a_{\ell} (t - \lambda)^\ell$$
 then we have for all $\ell\in \N$
$$v_p (a_{\ell}) \geq -(\alpha \log_p \ell
+\beta).$$
\end{lemma}

In order to prove Theorem \ref{th2} using the preceding lemma, we
have to find $n$ and $P_1$ such that for all $\ell>P_1$ we have
$v_p\left( (f_j(t)-\tilde{f}_{n,\ell}(t)).\sigma(b^\lambda_{j,\ell})
\right) \geq P_2$.

We have to solve the inequation
\begin{equation}
\alpha \log_p(p n - \frac{p-1}{2})+\beta -n \leq -P_2.
\end{equation}
If we set $\beta ' = \alpha +\beta + P_2$, it is sufficient to solve 

\begin{equation}
n-\alpha \log_p n - \beta' \geq 0.
\end{equation}
Now if we have $n/2 \geq \beta'$ and
\begin{equation}\label{eqfin}
n/2-\alpha \log_p n \geq 0
\end{equation}
we are done. Equation (\ref{eqfin}) is true for $n >
-\frac{2\alpha}{\ln(p)} \W(-1,-\frac{2}{\alpha})$ where $\W(-1,.)$ is
the real branch defined on the interval $[-1/e, 0]$ of the classical
Lambert function. In particular, given that
$$-\W(-1,-1/t) < \ln(t)$$
for all $t$ of the interval, the proposition is true. We refer to~\cite{MR1414285}
for a detailed survey on the Lambert function.
\end{proof}

\subsection{Recovering the zeta function}
From the action of the Frobenius morphism on a basis of $H^1_{MW,c}(V,
\pi_* A_K^\dagger)$, it is easy to recover the zeta function of the
curve $\overline{C_k}$ thanks to the Lefschetz trace formula (see
\cite[Cor.6.4]{MR1225991}). In our case this formula reads
$$Z(\overline{C}_k,t)=\frac{\det(1-t \phi_c^1)}{(1-t)(1-qt)}$$
where $\phi^1_c$ is the representation of the Frobenius morphism
acting on $H^1_{MW,c}(\overline{C}/K)$.

By the preceding results of this section, we can compute the matrix
$M_{F}$ of the action of the $p$-power Frobenius on a basis of
the space $H^1_{MW,c}(V,\pi_* A^\dagger_K)$.

We explain how to recover the Zeta function of our initial curve from
it.  We can embed the space $H^1_{MW,c}(C_k/K)$ in $H^1_{MW,c}(U_k/K)$
and thanks to the localization exact sequence in Monsky-Washnitzer
cohomology with compact support (deriving from the one in rigid cohomology, 
see~\cite{MR1728610})
$$0  \rightarrow H^0_{MW,c}((C_k \setminus U_k)/K) \rightarrow 
H^1_{MW,c}(U_k/K) \rightarrow H^1_{MW,c}(C_k/K) \rightarrow 0$$
give a description of a supplement. Namely 
$$\{(1\otimes 1,0,\ldots,0),\ldots,(0,\ldots,1\otimes 1,0)\}$$
in $\oplus_{\lambda \in \Lambda_0} (A_K^\dagger \otimes_{B^\dagger_K} 
\tilde{R}_{\lambda,c} )\oplus (A_K^\dagger \otimes_{B^\dagger_K} 
R_{\infty,c})$ is a basis of such a supplement 
(we identify here 
$H^1_{MW,c}(U_k/K)$ and $H^1_{MW,c}(V_k/K,\pi_*A^\dagger_K)$) that we call 
$W$. 
Let 
$\tilde{M}_{F}$ denote the matrix of the action of the $p$-th Frobenius 
on a basis of a supplement of $W$. Let $n$ be the absolute degree of
$k$ the base field of $C_k$. By computing the product
$$\tilde{M}_{\Sigma}=\tilde{M}_{F}\tilde{M}^{\sigma}_{F}\ldots
\tilde{M}_{F}^{\sigma^{n}},$$
we recover the matrix of the total Frobenius.

\subsection{Description of the algorithm and complexity analysis}\label{complex2}
The computation of the Frobenius representation on $H^1_{MW,c}(V,
\pi_* A^\dagger_K)$ can be done in three steps.

\subsubsection{Step 1: Lift of the Frobenius morphism}
Write 
$$F(Y)= Y Q(X)^{(p-1)/2} \left( 1+ \frac{Q^{\sigma}(X^{p}) - Q(X)^{p}}
  {Q(X)^{p}}\right)^{1/2},$$ and for each $\lambda \in \Lambda$,
compute a local analytic development up to precision $P_1$ of the
square root using a Newton iteration as in~\cite{Kedlaya2001}.

The dominant step of this operation is the Newton iteration which
takes $\tilde{O}(\log (q) P_1P_2)$ time and has to be repeated $O(g)$
times for the total cost of $\tilde{O}(g \log(q) P_1 P_2)$.

\subsubsection{Step 2: Computation of the representation of the
  Frobenius morphism}
Denote by $m^1_c, \ldots, m^{4g+1}_c \in M_c$ the elements of a basis
of the space $H^1_{MW,c}(V, \pi_* A_K^\dagger)$ computed with analytic
precision $1$. For $j=1, \ldots , 4g+1$, we write,
$m^j_c=(m^j_{\lambda_1}, \ldots, m^j_{\lambda{2g+1}}, m^j_\infty)$.

Next, for a fixed $\lambda \in \Lambda$, we do the following
operations:

\begin{enumerate}
\item For $j=1, \ldots, 4g+1$, compute the action of the local
  Frobenius $F_B(m^j_\lambda)$ up to the analytic precision $P_1$
  using the local differential equation given by Proposition
  \ref{localfrob}.
\item In the expression of $m^j_\lambda$, replace $Y$ by $\nabla_{GM,
    \lambda}(Y)=Y \sum_{\ell=-d_0}^{P_1} a_\ell t^\ell$ where
  $\sum_{\ell=-d_0}^{P_1} a_\ell t^\ell$ is the local expression in
  $\lambda$ of the lift of the relative Frobenius morphism obtained in
  Step 1. Then develop to obtain ${m'}^j_\lambda=F_B(m_\lambda^j)$.
\end{enumerate}

For the first operation, we have to compute the constant term
$F_B(u_j)$ of Equation (\ref{eqlocalfrob}) associated to
$m^j_\lambda$.  Keeping the notations of Section \ref{thealgo} Step 3,
we have
$$F_B \big( \sum_{\lambda' \in \Lambda}
\phi_\lambda(Pr_{\lambda'}
(\phi_{\lambda'}(f_i))).g_{j,i}^{\lambda'}(0) \big)=\sum_{\lambda' \in
  \Lambda} F_B \big(
\phi_\lambda(Pr_{\lambda'}
(\phi_{\lambda'}(f_i))) \big) .g_{j,i}^{\lambda'}(0)^\sigma.$$

For $\lambda' \in \Lambda$, we have to compute the action of $F_B$ on
principal parts of the form $1/(t-\lambda')$ and develop in $\lambda$.
The first thing can be done by computing a local development in
$\lambda$ of $t^p-\lambda'$ and then use a Newton iteration to
inverse the result.  These operations take $\tilde{O}(\log(q) P_1
P_2)$ time and has to be repeated $O(g)$ times with $O(g\log(q) P_1
P_2)$ memory consumption.

Using the asymptotically fast algorithm provided by Theorem 2
of~\cite{BostanCOSSS07} the total amount of time for solving $O(g)$
equations is $\tilde{O}(g \log(q) P_1 P_2)$. 

The second step is just $O(g)$ products of series with analytic precision
$P_1$ which takes $\tilde{O}(g \log(q) P_1 P_2)$.

For $\lambda$ running in $\Lambda$, all the preceding operations
allows us to recover ${m'}^j_\lambda$ with analytic precision $P_1$ for
$j=1, \ldots , 4g+1$ and $\lambda \in \Lambda$ for $\tilde{O}(g^2
\log(q) P_1 P_2)$ time and $O(g \log(q) P_1 P_2)$ memory
consumption.

The next thing to do is for $j=1, \ldots , 4g+1$ and for $\lambda \in
\Lambda_0$, subtract the principal part of ${m'}^j_\lambda$ to
${m'}^j_\infty$ to recover $F_\infty(m^j_\infty)$.  In order to
compute the contribution of the principal part of ${m'}^j_\lambda$ in
$\infty$, we have to obtain a local development in $\infty$ of an
element of $R_\lambda$. By considering a Laurent series in $\lambda$
as an analytic series in $\lambda$ times a term of the form
$1/(t-\lambda)^{m_o}$, we have to compute a local development in
$\infty$ of an analytic series $S_\lambda(t)$ and on the other side of
a term of form $1/(t-\lambda)^{m_o}$ and then take the product.  The
local development in $\infty$ of $S_\lambda(t)$ with precision $P_1$
can be done by computing the evaluation $S_\lambda(1+\lambda t)$ which
can be decomposed into the evaluation of $S_\lambda(1+t')$ using the shift 
operator for polynomials described in \cite{MR0398151} and the
substitution $t'=\lambda t$. This can be done in $\tilde{O}(\log(q)
P_1 P_2)$ time.  To compute a development $1/(t-\lambda)^{m_o}$ in $\infty$
we have to compute a development of $1/(1+\lambda t)$ and raise the
result to the power $m_o$.  As $m_o$ is in the order of $P_1$ this can
be done in $\tilde{O}(\log(P_1) \log(q) P_1 P_2)$.

All these operations have to be repeated $O(g^2)$ times for a total
cost of $\tilde{O}(g^2 \log(q)P_1 P_2)$.

The following lemma shows that in order to express $F(m_c)$ as a
linear combination of the basis vectors of $H^1_{MW,c}(V, \pi_*
A^\dagger_K)$ it is enough to do it for the local component at the
infinity point.

\begin{lemma}
  Let $m_c=(m_{\lambda_1}, \ldots, m_{\lambda{2g+1}},m_{\infty})$ be
  an element of $H^1_{MW,c}(V, \pi_*A^\dagger_K)$ such that for each
  $\lambda$ we have $m_\lambda=Y.f_{\lambda}$ with $f_\lambda$ a power
  series in $t-\lambda$. Write $f_\infty=\sum_{\ell=0}^\infty a_\ell
  t^\ell$. If for $\ell =0, \ldots , 2g+1$, $a_\ell=0$ then $m_c=0$.
\end{lemma}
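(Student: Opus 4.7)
The plan is to reduce the lemma to Corollary \ref{initial} by showing that the vanishing hypothesis on the coefficients $a_0,\ldots,a_{2g+1}$ at infinity forces the constant terms $c^{\lambda_i} := f_{\lambda_i}(0)$ at all finite points to vanish as well. Then since each $m_\lambda = Y f_\lambda$ has its $1$-component equal to zero, $\Phi(m_c)$ is identically zero, which by Corollary \ref{initial} implies $m_c = 0$.

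First I would exploit the fact that the Gauss-Manin connection preserves the decomposition $A^\dagger_K = B^\dagger_K \oplus Y \cdot B^\dagger_K$, since $\nabla_{GM}(1) = 0$ and $\nabla_{GM}(Y) = \tfrac{Q'(t)}{2Q(t)} Y$. Writing $m_c = Y \otimes h_1$ with $h_1 = (f_{\lambda_1},\ldots,f_{\lambda_{2g+1}},f_\infty) \in B_c$, the equation $\nabla_c(m_c) = 0$ reduces to the scalar equation $\tfrac{Q'}{2Q} \cdot h_1 + \partial h_1 = 0$ in $B_c$. Applying Proposition \ref{eqinhom} at the point $\infty$, the local component $f_\infty$ satisfies the inhomogeneous ODE
\begin{equation*}
\partial_t f_\infty + \bigl(\tfrac{Q'}{2Q}\bigr)_\infty f_\infty = u^\infty,
\end{equation*}
where, as computed in the proof of Proposition \ref{meminhom}, the right-hand side $u^\infty$ is the local expansion at infinity of $\sum_{\lambda' \in \Lambda_0} \tfrac{1}{2} \cdot c^{\lambda'} \cdot (t-\lambda')^{-1}$, there being no contribution from $\lambda' = \infty$ because $\tfrac{Q'}{2Q}$ is regular and vanishing at $\infty$ while $f_\infty$ has zero constant term.

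The heart of the argument is to compare low-order coefficients. Using the local parameter $s = t^{-1}$ at infinity, one has the expansion $(t-\lambda')^{-1} = \sum_{k\geq 1} (\lambda')^{k-1} s^k$, so that the coefficient of $s^k$ in $u^\infty$ equals $\tfrac{1}{2}\sum_{\lambda' \in \Lambda_0} c^{\lambda'} (\lambda')^{k-1}$. On the other hand, since $\bigl(\tfrac{Q'}{2Q}\bigr)_\infty = O(s)$ and by hypothesis $f_\infty = O(s^{2g+2})$, the left-hand side of the ODE has valuation $\geq 2g+3$ in $s$. Equating coefficients of $s^k$ for $k = 1,\ldots,2g+2$ yields the linear system
\begin{equation*}
\sum_{i=1}^{2g+1} c^{\lambda_i} (\lambda_i)^{j} = 0, \qquad j = 0, 1, \ldots, 2g+1,
\end{equation*}
whose coefficient matrix contains the $(2g+1)\times(2g+1)$ Vandermonde block on the distinct elements $\lambda_1,\ldots,\lambda_{2g+1}$. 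This Vandermonde is invertible, forcing $c^{\lambda_i} = 0$ for each $i$.

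Combining this with the hypothesis $m_\lambda = Y f_\lambda$ (which ensures the $1$-component of each local factor is zero) and the convention $f_\infty(0)=0$, we see that $\Phi(m_c) = 0$ in $(K[Y])^{2g+2}$. Corollary \ref{initial} then gives $m_c = 0$. The only slightly delicate step is to correctly identify the source of the coefficient equations and to check that we indeed get $2g+1$ independent Vandermonde conditions from the $2g+2$ vanishing hypotheses; once that bookkeeping is in place, the conclusion is immediate.
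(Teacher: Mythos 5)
Your proof is correct and follows essentially the same route as the paper's: the local inhomogeneous equation at infinity from Proposition \ref{eqinhom}, with the right-hand side identified as the expansion of $\tfrac{1}{2}\sum_{\lambda'} c^{\lambda'}(t-\lambda')^{-1}$, a comparison of low-order coefficients exploiting the vanishing hypothesis, a Vandermonde argument forcing the constant terms at the finite points to vanish, and the injectivity of $\Phi$ (Corollary \ref{initial}) to conclude. If anything, your bookkeeping is slightly tidier than the paper's: keeping $\partial_t$ you obtain the relations $\sum_i c^{\lambda_i}\lambda_i^{j}=0$ for $j=0,\ldots,2g+1$, whose rows $j=0,\ldots,2g$ form a genuine Vandermonde system on the distinct $\lambda_i$ (invertible even if one $\lambda_i=0$), whereas the paper works with the powers $1,\ldots,2g+1$.
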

\begin{proof}
Let $a_i$ 
denote the constant term of 
$f_{\lambda_i}$. Let $t' = t^{-1}$. By Proposition \ref{eqinhom}, the power series 
$f_\infty(t')$ satisfies an equation 
$$ \frac{\partial}{\partial t'} f_\infty + t'H(t') f_\infty = u(t')$$
where $H$ is a power series
$$u(t') =\frac{1}{2}\sum_{\ell \geq 0} \sum _{i=1,\ldots, 2g+1} a_i \lambda_i^\ell t'^{\ell+1}.$$
Hence, if the first $2g+2$ coefficients of $f_\infty$ are zero then we have
$$\sum _{i=1,\ldots,2g+1} a_i \lambda_i^\ell =0$$
for $l = 1\ldots 2g+1$. Now since the $\lambda_i$ are distinct and since
the matrix
$$M=\left(
\begin{array}{cccc}
\lambda_1 & \lambda_1^2 & ... & \lambda_1^{2g+1}\\
\lambda_2 & \lambda_2^2 & ... & \lambda_2^{2g+1}\\
... & ... & ... & ... \\
\lambda_{2g+1} & \lambda_{2g+1}^2 & ... & \lambda_{2g+1}^{2g+1} \\
\end{array} \right)^\vee$$
is the transpose of a minor of a Vandermonde matrix all the $a_\lambda$ are 
zero.
\end{proof} 

Using the preceding lemma, the decomposition of $F(m_c)$ in term of
the basis vectors costs $\tilde{O}(g^3 \log(q) P_2 )$ using the
algorithm of Gauss.

In all the running time of Step 2 in $\tilde{O}(g^2 \log(q) P_1 P_2)$
and the memory consumption is $O(g \log(q)P_1 P_2+g^2 \log(q) P_2)$.

\subsubsection{Step 3: Norm computation}
Compute 
$$\tilde{M}_\Sigma = \tilde{M}_F \tilde{M}_F^\sigma \ldots \tilde{M}_F^{\sigma^n},$$

using the divide and conquer approach presented in~\cite{Kedlaya2001}.
This requires
\begin{itemize}
\item  $O(\log(n))$ multiplications of $2g \times 2g$ matrices
each one of which costs $\tilde{O}(g^3 \log(q) P_2)$ time;
\item $\tilde{O}(g^2)$ application of the Frobenius morphism at the
  expense of $\tilde{O}(\log(q) P_2)$ time
  (\cite{MR2162716}).
\end{itemize}
The overall time and memory consumption are bounded respectively  by $\tilde{O}( g^3 \log(q)
P_2)$ and $O(g^2 \log(q) P_2)$.
\begin{proposition}
Let $C_k$ be an hyperelliptic curve of genus $g$ over the finite field
$k$ with cardinality $q$. We suppose that the ramification points of
$C_k$ are rational. There exists an algorithm to compute the action of
the Frobenius morphism on $H^1_{MW,c}(C_k/K)$ with analytic precision
$P_1$ and $p$-adic precision $P_2$ with time complexity $\tilde{O}(g^2
\log(q) P_1
P_2)+\tilde{O}(g^3 \log(q) P_2)$ and memory complexity $O(g \log(q)P_1
P_2+g^2 \log(q) P_2)$.
\end{proposition}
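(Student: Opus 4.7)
The plan is to assemble the proposition as the cumulative complexity bound of the three-step algorithm described above, exploiting the algorithmic ingredients that were proved individually in the previous subsections. The idea is that the action of Frobenius on $H^1_{MW,c}(C_k/K)$ is obtained by first computing a lift $F$ of the $p$-th power Frobenius (Step 1), then applying $F$ locally at every $\lambda \in \Lambda$ to the basis vectors of $H^1_{MW,c}(V,\pi_*A_K^\dagger)$ and re-expressing the result on this basis (Step 2), and finally taking the appropriate norm to pass from the $p$-th Frobenius to the $q$-th Frobenius acting on the curve (Step 3). Summing the time and memory requirements of these three steps will give the stated bound.

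I would first invoke Step 1: using Proposition \ref{proplift} and the Newton iteration of~\cite{Kedlaya2001}, a local analytic development of $F(Y)$ at each of the $O(g)$ points of $\Lambda$ is computed with precision $P_1$, each Newton iteration contributing $\tilde{O}(\log(q)P_1P_2)$ bit operations, for a total of $\tilde{O}(g\log(q)P_1P_2)$. Then Step 2 dominates: for each $\lambda \in \Lambda$ and each of the $4g+1$ basis vectors $m^j_c$, I use Proposition \ref{localfrob} (the twisted local equation) together with the fast linear ODE solver of Theorem 2 of~\cite{BostanCOSSS07} to propagate $F_B(m^j_\lambda)$ to analytic precision $P_1$ from its first terms, substitute $Y \mapsto F(Y)$, and then subtract the principal parts in the infinity component; Theorem~\ref{th2} guarantees that the chosen precisions $P_1$, $P_2$ are sufficient so that the truncated image coincides with the true image modulo $p^{P_2}$. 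The decomposition of each $F(m^j_c)$ on the basis, by the Vandermonde lemma just above, reduces to reading off the first $2g+2$ coefficients at infinity and solving a $(2g+2) \times (2g+2)$ linear system over $K$, contributing $\tilde{O}(g^3\log(q)P_2)$ via Gaussian elimination. Aggregating gives $\tilde{O}(g^2\log(q)P_1P_2)$ time and $O(g\log(q)P_1P_2 + g^2\log(q)P_2)$ memory for Step 2.

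Finally Step 3 reassembles the $p$-th Frobenius matrix $\tilde M_F$ into the $q$-th Frobenius matrix by computing the twisted product $\tilde M_\Sigma=\tilde M_F\,\tilde M_F^\sigma\cdots \tilde M_F^{\sigma^n}$ by a divide-and-conquer scheme as in~\cite{Kedlaya2001}; this uses $O(\log n)$ multiplications of $O(g) \times O(g)$ matrices with coefficients known to precision $P_2$ and $\tilde O(g^2)$ applications of $\sigma$, for a cost of $\tilde{O}(g^3\log(q)P_2)$ in time and $O(g^2\log(q)P_2)$ in space. Adding the three contributions then gives the stated time bound $\tilde O(g^2\log(q)P_1P_2)+\tilde O(g^3\log(q)P_2)$ and memory bound $O(g\log(q)P_1P_2+g^2\log(q)P_2)$.

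The only genuinely delicate point is Step 2, where one must ensure that the interaction between the infinite negative-power expansion of $F(Y)$ and the infinite positive-power expansion of the basis vector $m^j_\lambda$ is truncated consistently; this is where the logarithmic bound on coefficients from Theorem~\ref{majc} combined with Theorem~\ref{th2} is essential. The remaining steps are routine accounting over the already-established fast subroutines, so no additional argument should be needed beyond adding the complexities.
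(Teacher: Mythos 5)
Your proposal is correct and follows essentially the same route as the paper: the proposition is obtained there, too, by summing the costs of the three steps (Frobenius lift via Newton iteration, local propagation via Proposition \ref{localfrob} and the fast ODE solver of \cite{BostanCOSSS07} together with the principal-part bookkeeping and the Vandermonde lemma for the basis decomposition, then the divide-and-conquer norm computation), with the same per-step bounds you state. No substantive difference to report.
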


\section{Overall complexity analysis}\label{secconl}
In this paragraph, we gather the results of Section \ref{complex1} and
Section \ref{complex2} in order to give time and memory complexity
bounds for the computation of the number of rational points of an
hyperelliptic curve defined over a finite field of characteristic $p$
and cardinality $q=p^n$ using our algorithm.

First, we have to assess the analytic $P_1$ and $p$-adic precision
$P_2$ of the computations. By the Riemann hypothesis for curves, we
know that it is enough to compute the coefficients of the matrix
$\tilde{M}_F$ with precision $g/2.n+(2g+1)\log_p(2)$.

Next, by Theorem \ref{th2}, we can take $P_1=O(P_2)$ and we get the
\begin{theorem}
  Let $C_k$ be an hyperelliptic curve of genus $g$ over the finite
  field $k$. Let $n$ be the absolute degree of $k$.  We suppose that
  the ramification points of $C_k$ are rational. There exists an
  algorithm to compute the characteristic polynomial of the Frobenius
  morphism acting on $H^1_{MW,c}(C_k/K)$ with $\tilde{O}(g^4 n^3)$
  time complexity and $O(g^3 n^3)$ memory complexity.
\end{theorem}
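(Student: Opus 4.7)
The approach is to combine the complexity estimates already proved in Sections \ref{complex1} and \ref{complex2} with suitable choices of the analytic precision $P_1$ and $p$-adic precision $P_2$ that are sufficient to recover the characteristic polynomial of Frobenius.

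The first step is to fix $P_2$. The characteristic polynomial of the Frobenius acting on $H^1_{MW,c}(C_k/K)$ has degree $2g$ with integer coefficients whose $i$-th term is bounded in absolute value by $\binom{2g}{i} q^{i/2} \leq 2^{2g} q^{g/2}$ by the Riemann hypothesis for curves. Hence it is enough to compute the matrix $\tilde{M}_\Sigma$ modulo $p^{P_2}$ with $P_2 = \lceil gn/2 \rceil + (2g+1)\log_p(2) = O(gn)$, and to extract the characteristic polynomial by standard linear algebra, which is dominated by the other steps.

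The second step is to fix $P_1$. Here one invokes Theorem \ref{th2}: with $\alpha$ and $\beta$ the constants appearing in Theorem \ref{majc}, a sufficient analytic precision is $P_1 = p n' - (p-1)/2$ where $n' = \max\bigl(2\alpha \log_p(\alpha/(2\ln p)),\, 2(\alpha+\beta+P_2)\bigr)$. Remark \ref{remab} shows that for fixed $p$ the constants $\alpha$ and $\beta$ are $O(1)$ (and $B = 0$ in our situation), so this reduces to $P_1 = O(P_2) = O(gn)$.

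The third and final step is to substitute $P_1 = O(gn)$, $P_2 = O(gn)$ and $\log(q) = n \log p = O(n)$ into the bounds of the propositions of Sections \ref{complex1} and \ref{complex2}. The Frobenius step gives time $\tilde{O}(g^2 \log(q) P_1 P_2) + \tilde{O}(g^3 \log(q) P_2) = \tilde{O}(g^4 n^3)$ and memory $O(g\log(q) P_1 P_2 + g^2 \log(q) P_2) = O(g^3 n^3)$. The cohomology basis step contributes $\tilde{O}(g^2 \log(q) P_1 P_2)$ time and $O(g \log(q) P_1 P_2)$ memory, both strictly smaller. Putting these together yields the announced $\tilde{O}(g^4 n^3)$ time and $O(g^3 n^3)$ memory. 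The only non-routine point is the tight choice $P_1 = O(P_2)$, which is exactly what Theorem \ref{th2} enables thanks to the logarithmic control on the coefficients of the basis coming from Theorem \ref{majc}.
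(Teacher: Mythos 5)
Your proposal is correct and follows essentially the same route as the paper: bound $P_2=O(gn)$ via the Riemann hypothesis for curves, invoke Theorem \ref{th2} (with the explicit constants of Theorem \ref{majc} and Remark \ref{remab}) to take $P_1=O(P_2)$, and substitute into the complexity bounds of Sections \ref{complex1} and \ref{complex2}. Your version merely spells out the coefficient bound and the substitution in slightly more detail than the paper does.
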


The algorithm described in the preceding sections have been
implemented in magma~\cite{algo}. Our implementation is only aimed at
showing the correctness of our algorithm.

\section{Conclusion}
In this paper we have described an algorithm to count the number of
rational points of an hyperelliptic curve over a finite field of odd
characteristic using Monsky-Washnitzer cohomology with compact
support.  The worst case complexity of our algorithm is quasi-cubic in
the absolute degree of the base field.  We remark that the base
computation can be easily adapted for more general curves. The reason
why we focus on the case of hyperelliptic curves in this paper is that
for more general curves the assessment of the analytic precision
necessary for the computations is more difficult. Actually, in order
to treat more general curves it is necessary to obtain an explicit
logarithmic bound for the elements of a basis of the cohomology. The
result we used in this paper guarantees such a bound provided that the
connection matrix has only simple poles and that the exponents of the
local differential equations are prepared. This last condition on the
exponents means that they are non integral or null rationals numbers
which differences are zero if they are integral. In our case, the
exponents are $0$ and $-1/2$.

\subsubsection{Acknowledgement} The authors would like to express
their deep gratitude to Bernard Le Stum for the important suggestions
and advises he gave us during the elaboration of this paper.

\newcommand{\etalchar}[1]{$^{#1}$}

\end{document}